\theoremstyle{plain}
\newtheorem{thm}{Theorem}[section]
\newtheorem*{thm*}{Theorem}
\newtheorem{lem}[thm]{Lemma}
\newtheorem{prop}[thm]{Proposition}
\newtheorem{cor}[thm]{Corollary}
\theoremstyle{definition}
\newtheorem{conj}[thm]{Conjecture}
\theoremstyle{remark}
\newcommand{\R}{\mathbb{R}}
\newcommand{\RR}{\mathbb{R}}
\newcommand{\Ss}{\mathcal{S}} 
\DeclareMathOperator{\supp}{{supp}}
\newcommand{\ch}{{H}} 
\newcommand{\cb}{{B}} 
\newcommand{\cp}{{\mathbb{P}}} 
\newcommand{\ct}{{T}} 
\newcommand{\comb}{\mathbb{B}} 
\newcommand{\comt}{\mathbb{T}} 
\newcommand{\rg}{G} 
\newcommand{\dg}{D}
\newcommand{\arc}{\mathcal{A}}
\newcommand{\ci}{\widetilde}
\newcommand{\ws}{\alpha} 
\newcommand\SetOf[2]{\left\{#1\vphantom{#2}\,\right.\left|\,\vphantom{#1}#2\right\}}
\newcommand\smallSetOf[2]{\{#1\,|\,#2\}}
\newcommand\cP{{\mathcal P}}
\title[Optimal realisations and tight-spans]{Optimal realisations of two-dimensional, totally-decomposable metrics}
\author[Herrmann \and Koolen \and Lesser \and Moulton \and Wu]{Sven Herrmann \and Jack H. Koolen \and  Alice Lesser \and  Vincent Moulton \and Taoyang Wu }
\address{Sven Herrmann, School of Computing Sciences, University of East Anglia, Norwich, NR4 7TJ, UK}
\email{mail@svenherrmann.net}
\address{Jack H. Koolen, Wen-Tsun Wu Key Laboratory of CAS, School of Mathematical Sciences,   
University of Science and Technology of China (USTC)
 China}
\email{koolen@ustc.edu.cn}
\address{Alice Lesser, Department of Mathematics, Uppsala University, Box 480, 751 06 Uppsala, Sweden}
\email{alice@lesser.se}
\address{Vincent Moulton, School of Computing Sciences, University of East Anglia, Norwich, NR4 7TJ, UK}
\email{vincent.moulton@cmp.uea.ac.uk}
\address{Taoyang Wu, School of Computing Sciences, University of East Anglia, Norwich, NR4 7TJ, UK, and Department of Mathematics, National University of
Singapore, 10 Lower Kent Ridge Road, 119076, Singapore 
}
\email{taoyang.wu@gmail.com}
\begin{document}

\begin{abstract}
A realisation of a metric $d$ on a finite set $X$
is a weighted graph $(G,w)$ whose vertex 
set contains $X$ such that the 
shortest-path distance between 
elements of $X$ considered as vertices in $G$ is equal to $d$.
Such a realisation $(G,w)$ is called optimal 
if the sum of its edge weights is minimal 
over all such realisations. Optimal realisations
always exist, although it is NP-hard to 
compute them in general, and they have applications in areas
such as phylogenetics, electrical networks  
and internet tomography. In
[{\em Adv. in Math.} {\bf 53}, 1984, 321-402]
A.~Dress showed that the optimal realisations 
of a metric $d$ are closely related to a certain 
polytopal complex that can be canonically 
associated to $d$ called its tight-span. 
Moreover, he conjectured that the 
(weighted) graph consisting of the zero- and one-dimensional 
faces of the tight-span of $d$ 
must always contain an optimal realisation as a
homeomorphic subgraph. 
In this paper, we prove that this 
conjecture does indeed hold for a certain class of
metrics, namely the class of totally"=decomposable metrics 
whose tight-span has dimension two. 
As a corollary, it follows that the minimum Manhattan network problem is a special case of finding optimal realisations of two-dimensional totally"=decomposable metrics.\\ 

\noindent
{\bf Keywords:} optimal realisations, totally-decomposable metrics, tight-span, Manhattan network problem, Buneman complex

\bigskip
\noindent
{\bf 2000 MSC:} 05C12,  30L05, 52B99, 54E99.
\end{abstract}

\maketitle

\section{Introduction}

Let $(X,d)$ be a finite metric space, that is, a finite 
set $X$, $|X| \ge 2$, together with a metric $d$ (i.e., 
a symmetric map
$d:X \times X \to \mathbb{R}_{\ge 0}$ that vanishes precisely on 
the diagonal and that satisfies the triangle inequality).
A \emph{realisation} $(G,w)$ of $(X,d)$ consists of 
a graph $G=(V(G),E(G))$ with $X$ a subset of the
vertex set $V(G)$ of $G$, 
together with a weighting $w:E(G) \to \mathbb{R}_{>0}$
on the edge set $E(G)$ of $G$  
such that for all $x,y \in X$ the length of  
any shortest path in $(G,w)$ between $x$ and $y$ equals $d(x,y)$.
A realisation $(G,w)$ of $d$ is called \emph{optimal} if
$\sum_{e \in E(G)}w(e)$  is
minimal amongst all realisations of $(X,d)$. 

Realising metrics by graphs has applications in
fields such as phylogenetics, 
electrical networks 
and internet tomography. 
Optimal realisations were introduced by
Hakimi and Yau~\cite{HY65} who also gave a polynomial algorithm
for their computation in the special case where the metric space 
has a (necessarily unique) optimal realisation that is a  tree. 
Every finite metric space has an 
optimal realisation \cite{Dress84,Imrich84}, 
although they are 
not necessarily unique \cite{Althofer88,Dress84}.
In general, it is NP-hard to compute 
optimal realisations~\cite{Althofer88,Winkler88},
although recently some progress has been 
made in deriving heuristics for their computation
\cite{HV07,HV08}.

In \cite{Dress84}, Dress pointed out an intriguing
connection between optimal realisations and tight-spans,
which we now recall. 
The {\em tight-span} $\ct(d)$ of the metric
space $(X,d)$ \cite{Dress84,Isbell}
is the set of all 
minimal elements (with respect to the product order) of 
the polyhedron
\[
\cp(d) := \{f \in \mathbb{R}^X : 
f(x)+f(y)\geq d(x,y) \text{ for all } x,y \in X\}\,.
\]
Note that, in particular, $T(d)$ consists of the union of the 
bounded faces of $P(d)$. Moreover, 
the map $d_\infty$, given by $d_\infty(f,g)=\sup_{x\in X} |f(x)-g(x)|$ 
for all $f,g\in\cp(d)$, is a metric on $T(d)$
and the {\em Kuratowski map}
\[
\kappa:X \rightarrow \ct(d): x \to h_x; \,\,\,\,\, h_x(y):=d(x,y), 
\mbox{ for all } x \in X,
\]
gives an isometric embedding of $(X,d)$ into 
$(T(d),d_\infty)$; that is, $\kappa$ is 
injective and preserves distances. 

In \cite[Theorem~5]{Dress84}, Dress
showed that the 
(necessarily finite and connected) weighted graph $G_d$ 
consisting of the zero- and one-dimensional faces of $T(d)$ 
and weighting $w_\infty$ defined by 
$w_\infty(\{f,g\}):=d_\infty(f,g)$, $f,g$ zero-dimensional faces of $T(d)$,  
is homeomorphic to
a realisation of $d$ 
(see Section~\ref{sec:prelim} for relevant definitions). 
Moreover, he showed that if 
$(G,w)$ is any optimal realisation of $(X,d)$,
then there exists a certain map $\psi:V(G) \to T(d)$
of the vertices of $G$ into $T(d)$ \cite[Theorem 5]{Dress84}
(see also Theorem~\ref{thm:inj:geometric:graph} below).
This led him to suspect that  every optimal realisation of $(X,d)$ 
is homeomorphic to a subgraph of  $(G_d,w_\infty)$.
Even though this conjecture was disproven 
by Alth\"ofer \cite{Althofer88}, the following 
related  conjecture is still open:

\begin{conj}[cf. (3.20) in \cite{Dress84}]\label{conj:dress}
Let $(X,d)$ be a finite metric space.  Then 
there exists an optimal realisation of $(X,d)$ 
that is homeomorphic to a subgraph of $(G_d,w_\infty)$.
\end{conj}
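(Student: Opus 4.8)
The plan is to start from an arbitrary optimal realisation $(G,w)$ of $(X,d)$ together with the map $\psi\colon V(G)\to T(d)$ provided by Dress (see Theorem~\ref{thm:inj:geometric:graph}), and first to move the \emph{whole} realisation into the tight-span. Identifying $X$ with its image in $V(G)$, the restriction of $\psi$ to $X$ agrees with the Kuratowski map $\kappa$; since $(T(d),d_\infty)$ is injective (hyperconvex), $\psi$ extends to a non-expansive map $\Psi$ from the topological realisation $|G|$ of $(G,w)$ into $T(d)$ sending each edge to an arc of length at most its weight. Subdividing $\Psi(|G|)$ at the images of the vertices of $G$ yields a realisation of $d$: it contains $\kappa(X)$ isometrically, so no $\kappa(x),\kappa(y)$ can be joined in it by an arc shorter than $d_\infty(\kappa(x),\kappa(y))=d(x,y)$, while the image of a shortest $xy$-path of $G$ is an arc of length exactly $d(x,y)$. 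Its total length is at most $\sum_{e\in E(G)}w(e)$, hence equal by optimality; so each edge maps isometrically onto its image and the images are essentially disjoint. \emph{Thus we may assume at the outset that there is an optimal realisation geometrically embedded in the polytopal complex $T(d)$.}

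It then suffices to prove the purely geometric statement that an optimal realisation embedded in $T(d)$ can be deformed, without increasing total length, into one whose underlying graph lies in the $1$-skeleton $G_d$. I would attack this by induction over the faces of $T(d)$ in order of decreasing dimension. Let $F$ be a face of maximal dimension $k\ge 2$ whose relative interior meets the embedded realisation. The part of the realisation inside $F$ is a finite graph embedded in the polytope $F$ with the restricted metric $d_\infty$, meeting the boundary complex $\partial F$ (the union of the proper faces of $F$) exactly where the edges leaving $F$ cross out. The aim is a length-non-increasing retraction of this subgraph onto $\partial F$; pushing every arc that passes through the relative interior of $F$ out to $\partial F$, and iterating over all faces of dimension $\ge 2$, would leave the realisation inside $G_d$.

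The hard part is exactly this retraction step, and it is the reason the conjecture is still open. For a general polytope with the $\ell_\infty$-metric there is no length-non-increasing deformation retraction onto the boundary — an arc through the interior can be strictly shorter than any detour along $\partial F$ — so one must exploit the special structure of tight-span cells, which are cut out by the coherent decomposition of $d$ into split metrics (each $k$-face corresponding to a family of pairwise crossing splits) and are consequently metrically rigid. The strategy would be to show that, \emph{for an optimal realisation}, no arc genuinely needs the relative interior of $F$: if an arc entered it, global optimality together with the fact that $G_d\subseteq T(d)$ already realises $d$ with minimal total weight should let the arc be re-routed along $\partial F$ by a local bending move at no extra cost, and iterating clears the relative interior of $F$. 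When $k=2$ the $2$-cells of a tight-span are constrained enough — essentially rectangle- or hexagon-like cells arising from pairs of crossing splits — that the bending move provably does not lengthen the realisation, which is what makes the two-dimensional, totally-decomposable case accessible. The genuine obstruction in general is that the difficulty is global rather than local: re-routing an arc around one face can force the realisation deeper into a neighbouring face, so one needs a coherent simultaneous choice of detours over the whole high-dimensional part of $T(d)$ — or, alternatively, an induction on $|X|$ by point deletion compatible with how the tight-span changes — and controlling that interaction is the real obstacle.
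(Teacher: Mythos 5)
You have not proved the statement, and you could not have been expected to: it is stated in the paper as a conjecture (Dress's (3.20)) and is still open; the paper itself only establishes the special case of two-dimensional, totally-decomposable metrics (Theorem~\ref{thm:min-path-sat}, which yields Theorem~\ref{2compat}). Your own text concedes this — the ``retraction'' step, pushing the part of an embedded optimal realisation out of the relative interiors of the cells of $T(d)$ into the $1$-skeleton $G_d$ without increasing length, is precisely the missing content, and you offer no argument for it beyond the hope that ``a local bending move'' works, together with an acknowledgement that the local moves may conflict globally. So the core of the proof is absent, and what remains is a programme, not a proof. There are also gaps in the reduction you do carry out: the image of $||(G,w)||$ under the non-expansive map need not be injective or homeomorphic to $G$ (Theorem~\ref{thm:inj:geometric:graph} guarantees injectivity only for path-saturated realisations, and Alth\"ofer's example shows some optimal realisations are genuinely not subgraphs of $(G_d,w_\infty)$), and turning the image set into a weighted graph realisation with ``each edge mapping isometrically and images essentially disjoint'' is asserted rather than argued; for the conjecture one only needs \emph{some} optimal realisation, so one should, as the paper does, first pass to a carefully chosen one (minimal path-saturated) rather than an arbitrary one.

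It is also worth noting that even for the case the paper does settle, its route is different from your sketch. Rather than a face-by-face retraction, the paper works with a minimal path-saturated optimal realisation, transports it into the Buneman complex $B(\Ss,\alpha)$ (which for two-compatible split systems is isometric to the tight-span, Theorem~\ref{thm:two-decomposable}), and uses split potentials together with split-flow digraphs (Lemma~\ref{potential}, Proposition~\ref{prop:split-flow:digraph}) to show that every vertex of the realisation is sent to a \emph{vertex} of the complex (Theorem~\ref{verth}). Edges are then re-routed along the $1$-skeleton using the fact that in the two-dimensional case $d_\infty$-distances between vertices of $T(d)$ are realised inside $(G_d,w_\infty)$ (Proposition~\ref{prop:two-decomp:tight-span}; note the two-cells are quadrangles, not ``hexagon-like''), and path-saturation plus optimality rule out unwanted overlaps of the chosen paths. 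If you want to pursue your deformation idea, the place it must be made precise is exactly where you stopped: a length-non-increasing, globally consistent re-routing off the interiors of cells of dimension $\ge 2$, which for general metrics is not known to exist.
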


Apart from having an intrinsic mathematical interest, if
this conjecture were true, it could provide
new strategies for computing optimal
realisations, as it would provide a ``search space''
(albeit a rather large one in general) in 
which to systematically search for optimal realisation \cite{HMS13}.

Conjecture \ref{conj:dress} is known to hold
for metrics $d$ that can be realised by a tree since
in this case  $(G_d,w_\infty)$ is precisely 
the tree that realises $d$ uniquely~\cite{Imrich84}. 
In this paper, we show that it also 
holds for a certain 
class of metrics that generalise tree metrics.
More specifically, for a finite 
metric space $(X,d)$ as above, define,  
for any four elements $x,y,u,v \in X$, 
\[
\beta(x,y;u,v) := \max\{d(x,u)+d(y,v),d(x,v)+d(y,u)\} - d(x,y) -d(u,v)
\]
and put $\alpha(x,y;u,v) := \max(\beta(x,y;u,v),0)$.
The metric $d$ is called {\em totally"=decomposable} if for all 
$t,x,y,u,v\in X$ the inequality 
$\beta(x,y;u,v)\leq \alpha(x,t;u,v)+ \alpha(x,y;u,t)$ holds \cite{BD92}.
Such metrics are commonly used to understand genetic data in phylogenetic analysis. 
Defining the \emph{dimension} of $d$ 
to be the dimension of $T(d)$
(regarded as a subset of $\mathbb{R}^X$), we shall 
prove the following result.

\begin{thm}\label{2compat}
Let $(X,d)$ be a totally"=decomposable finite metric space with dimension two.
Then there exists an optimal realisation of $(X,d)$ that is
homeomorphic to a subgraph of  $(G_d,w_\infty)$.
\end{thm}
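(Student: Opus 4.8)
The plan is to exploit the combinatorial structure of the tight-span of a totally-decomposable metric. By the theory of Bandelt and Dress \cite{BD92}, a totally-decomposable metric $d$ admits a canonical decomposition into split metrics, and the tight-span $T(d)$ of such a metric has a very restricted polytopal structure: its two-dimensional cells are (essentially) parallelograms and its combinatorial type is governed by the underlying split system. When $\dim T(d)=2$, the associated graph $G_d$ is a planar graph built from these polygonal cells glued along edges. The first step is therefore to set up this structure carefully: identify the two-cells of $T(d)$, record that each is a convex polygon whose boundary edges carry the $w_\infty$ weights, and describe how $\kappa(X)$ sits among the vertices of $G_d$ (in particular, which vertices are ``essential'' because they are hit by $\kappa$ or are cut-vertices, using \cite{DHKMS10}).

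Next I would take an arbitrary optimal realisation $(G,w)$ of $(X,d)$ and push it into $T(d)$ via the map $\psi:V(G)\to T(d)$ supplied by \cite[Theorem~5]{Dress84} (Theorem~\ref{thm:inj:geometric:graph} in the excerpt). The goal is to show that $(G,w)$ can be chosen so that $\psi$ maps it homeomorphically onto a subgraph of $G_d$. The key reduction is to block structure: an optimal realisation decomposes at its cut-vertices into ``blocks'', each block is itself an optimal realisation of the induced metric on the $X$-points it carries, cut-vertices lie in $T(d)$, and the tight-span correspondingly decomposes, so it suffices to treat a single block. For a two-dimensional totally-decomposable metric each block corresponds either to a single split (giving an edge) or to a single two-dimensional cell of $T(d)$ — a polygon. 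So the crux becomes: \emph{the optimal realisation of the metric carried by one polygonal cell of $T(d)$ is homeomorphic to a subgraph of the boundary cycle of that polygon.}

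To handle a single cell, I would argue that within a two-dimensional cell the metric restricted to the relevant $X$-points is (after the split decomposition) a sum of a ``polygonal'' part and tree-like parts, and that an optimal realisation cannot do better than routing along the boundary of the polygon: any edge of $(G,w)$ whose image under $\psi$ cuts through the interior of the polygon can be rerouted along the boundary without increasing total weight, using convexity of the cell and the fact that $d_\infty$ restricted to a parallelogram-type cell is an $\ell_\infty$-type (hence ``rectilinear'') metric for which boundary routing is optimal. Combining the rerouted pieces and checking that the result is still a realisation (no shortcuts are created — this uses that the cells meet only along edges and that $\psi$ is compatible with shortest paths) gives an optimal realisation inside $G_d$; finally one removes redundant degree-two vertices to get the homeomorphic subgraph.

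The main obstacle I anticipate is the single-cell rerouting step: showing rigorously that pushing interior edges of an optimal realisation to the boundary of a two-cell neither increases the total weight nor destroys the realisation property. This requires a precise understanding of which $X$-points label which faces of the cell (the ``gated'' or ``coherent'' subsets in the Buneman/split picture), control over the directions in which edges of $(G,w)$ can point inside the cell, and a careful case analysis of polygon types that can occur — which is presumably where the hypothesis of dimension exactly two, as opposed to higher dimension, is essential, since in dimension two the cells are genuinely two-dimensional polygons rather than higher-dimensional zonotopes whose boundary is no longer one-dimensional.
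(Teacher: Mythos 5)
Your proposal has two genuine gaps, one structural and one at the heart of the matter.

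First, the reduction to a single two-cell does not work. By Theorem~\ref{thm:two-decomposable}~\eqref{thm:two-decomposable:dim} the two-cells of $\ct(d)$ are indeed quadrangles, but a two-connected piece of the tight-span of a two-decomposable metric is typically a union of \emph{many} quadrangles glued along edges (already a cyclic system of five splits on a five-point set, in which only neighbouring splits cross, produces a two-connected ring of five quadrangles). So even after decomposing at cut-vertices you are left with the full problem, not a one-polygon problem. A smaller point: inside a quadrangle cell the restriction of $d_\infty$ behaves like $\ell_1$, not $\ell_\infty$ --- it is the isometry with the Buneman complex, Theorem~\ref{thm:two-decomposable}~\eqref{thm:two-decomposable:isom}, that makes boundary routing lossless; for a genuine $\ell_\infty$ square the diagonal is strictly shorter than any boundary path, so the justification you give for the rerouting step is the wrong one.

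Second, and more importantly, the proposal never addresses the step that carries essentially all of the weight: one must first choose the \emph{right} optimal realisation and then prove that $\psi$ sends its vertices to \emph{vertices} of $\ct(d)$. For an arbitrary optimal realisation the desired conclusion is simply false --- Alth\"ofer's example \cite{Althofer88} gives an optimal realisation not homeomorphic to any subgraph of $(G_d,w_\infty)$ --- and $\psi$ need not even be injective unless the realisation is path-saturated (Theorem~\ref{thm:inj:geometric:graph}). The paper works with a \emph{minimal path-saturated} optimal realisation and establishes $\psi(V(G))\subseteq V(\ct(d))$ via Theorem~\ref{verth}, whose proof requires the split-potential and split-flow-digraph machinery of Section~\ref{sec:split:flow}; in particular Proposition~\ref{prop:split-flow:digraph} uses both path-saturation and vertex-minimality through reweighting and edge-contraction arguments. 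Your rerouting idea corresponds roughly to Proposition~\ref{prop:two-decomp:tight-span}, but that only applies once the endpoints of each edge are already known to lie at vertices of $G_d$; without the vertex-placement step the rerouting has nowhere to start, and without path-saturation and minimality the final bookkeeping (that the rerouted paths meet only at shared endpoints, so no shortcuts or triangles are created) also fails.
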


In fact this immediately follows from a somewhat 
stronger theorem that we shall prove
(Theorem~\ref{thm:min-path-sat}), which shows that 
a certain special type of optimal 
realisation of a two-dimensional, totally-decomposable 
metric $d$ can be found as a homeomorphic subgraph
of $(G_d,w_\infty)$. 
 Note also that Theorem~\ref{2compat} implies that the optimal realisation problem for $l_1$-planar metrics is equivalent to the Minimum Manhattan Network (MMN) problem; since the MMN problem is NP-hard~\cite{chin2011min}, the optimal realisation problem for two-dimensional metrics is also NP-hard (see \cite[Section 5]{HMS13} for more details and some algorithmic consequences).

Our proof of Theorem~\ref{2compat} heavily 
relies on the two-dimensionality of the tight-span, and  
we do not know how to extend our arguments to 
totally-decomposable metrics. 
Even so, it might be of interest to try and extend our
result to two-dimensional metrics in general, especially  
as a great deal is known concerning the structure of 
their tight-spans (e.g. \cite{Hirai09,Karzanov98}). Indeed, 
our proof of Theorem~\ref{2compat} relies on a close relationship between 
tight-spans and so-called Buneman  or
median complexes, and so results concerning median complexes
and folder complexes \cite{Chepoi00}
could potentially help yield a more general
result for two-dimensional metrics.

The remainder of this paper is organised as follows.  We recall some definitions and results in Section~\ref{sec:prelim}. 
We will then present a theorem about embeddings  of realisations into the Buneman complex in Section~\ref{sec:split:flow} which uses
the new notions of split-flow digraphs and split potentials.  
Finally, we establish our main result in Section~\ref{sec:proof}, from which Theorem~\ref{2compat} follows.

\section{Preliminaries and previous results}\label{sec:prelim}

In this section, we will state the known definitions 
and results that are used in the rest of the paper. 

\subsection{Graphs}\label{ssec:graphs}

A \emph{weighted graph} $(G,w)$ is a 
graph $G$ with vertex set $V(G)$ and 
edge set $E(G)\subseteq \binom{V(G)}2$ together 
with a weight function 
$w: E(G) \to \mathbb{R}_{>0}$ that assigns a 
positive weight or {\em length} to each edge. 
A weighted graph $(G',w')$ is a \emph{subgraph} 
of $(G,w)$ if 
$V(G')\subseteq V(G)$, $E(G')\subseteq 
\smallSetOf{e\in E(G)}{e\subseteq V(G')}$ and $w'=w|_{E(G')}$. 
The \emph{length} of $(G,w)$ is $l(G,w):=\sum_{e\in E(G)}w(e)$. 
A {\em path} 
$P$ from $u$ to $v$ in $G$ is a sequence $u=v_0,v_1,\dots,v_{k-1},v_k=v$ 
of distinct vertices in $G$ such that $\{v_{i-1},v_{i}\}\in E(G)$ for all 
$1 \le i \le k$. Note that $u$ and $v$ will be referred to as  the {\em ends}  of $P$ and the {\em length} of $P$ is defined as
$w(P):=\sum_{i=1}^k w(\{v_{i-1},v_i\})$. 
It is easily observed that for any $W\subseteq V(G)$ 
the map setting $d_{(G,w)}(u,v)$ to be the length of a 
shortest path between $u$ and $v$ defines 
a metric space $(W,d_{(G,w)})$.

We {\em suppress} a vertex of degree two in a weighted graph  
when we remove it and replace its two incident edges by 
a single edge whose length is equal to the sum of their lengths.
Given two weighted graphs $(G_1,w_1)$ and $(G_2,w_2)$, 
they are {\em isomorphic} if there exists an isomorphism 
between $G_1$ and $G_2$ that also preserves the length 
of each edge; they are {\em homeomorphic} if there exist 
two isomorphic weighted graphs $(G'_1,w'_1)$ and 
$(G'_1,w'_2)$ such that  $(G'_i,w'_i)$ ($i=1,2$) can be  
obtained from $(G_i,w_i)$ by suppressing a sequence 
of degree two vertices. 

As mentioned in the introduction, a weighted graph $(G,w)$ 
with $X\subseteq V(G)$ 
and $d=d_{(G,w)}$ is called a \emph{realisation} 
of the metric space $(X,d)$. The elements in $V(G)\setminus X$ 
are called \emph{auxiliary vertices} of the realisation, 
and throughout this paper we will use the convention 
that all auxiliary vertices of degree two are 
suppressed. 

\subsection{Optimal realisations and geodesics}

We now recall some well-known observations 
concerning optimal realisations.

\begin{lem}[Lemma~2.1 in \cite{Althofer88}]
\label{lem:suff:opt:realisation}
Let $(G,w)$ be an optimal realisation of a finite metric space $(X,d)$. Then:
\begin{enumerate}
\item For any edge $e\in E(G)$, there exist two 
elements $x,x'\in X$ such that $e$ belongs to all shortest 
paths between $x$ and $x'$.
\item \label{lem:suff:opt:realisation:two}For any 
two edges in $E(G)$ that share a common vertex, 
there exists a shortest path between two elements 
of $X$ that contains these edges. 
\end{enumerate}
\end{lem}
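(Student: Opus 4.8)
The plan is to argue by contradiction, exploiting the minimality of $l(G,w)$: if either condition fails, we will modify $(G,w)$ into a strictly shorter realisation of $(X,d)$, contradicting optimality. First I would set up the key observation that underlies both parts. For an edge $e=\{u,v\}\in E(G)$, call $e$ \emph{essential} if there exist $x,x'\in X$ such that \emph{every} shortest path from $x$ to $x'$ uses $e$; equivalently, $e$ is essential precisely when $d_{(G\setminus e,\,w)}(x,x') > d_{(G,w)}(x,x')$ for some pair $x,x'\in X$, i.e. deleting $e$ strictly increases some relevant distance. So to prove (1) I would assume for contradiction that some edge $e$ is \emph{not} essential: then for every pair $x,x'\in X$ there is a shortest $x$-$x'$ path in $(G,w)$ avoiding $e$, hence $d_{(G\setminus e,\,w)} = d$ on $X\times X$ and $(G\setminus e, w|_{E(G)\setminus\{e\}})$ is again a realisation of $(X,d)$ (after suppressing any degree-two vertices thereby created), but of length $l(G,w)-w(e) < l(G,w)$, contradicting optimality. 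Hence every edge is essential, which is exactly (1).

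For part (2), suppose $e_1=\{u,v\}$ and $e_2=\{v,z\}$ share the vertex $v$, and assume for contradiction that no shortest path between two elements of $X$ contains both $e_1$ and $e_2$. Using part (1), pick $x,x'\in X$ with $e_1$ on every shortest $x$-$x'$ path, and $y,y'\in X$ with $e_2$ on every shortest $y$-$y'$ path; by our assumption no shortest $x$-$x'$ path continues through $e_2$ at $v$ and no shortest $y$-$y'$ path continues through $e_1$ at $v$. The idea is then to ``split'' the vertex $v$: introduce a new vertex $v'$, re-route $e_2$ (and, more carefully, an appropriate subset of the edges incident to $v$) to be incident to $v'$ instead of $v$, and join $v$ and $v'$ by a new edge of a very small weight $\varepsilon>0$. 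One must choose which edges move to $v'$ so that all shortest paths between elements of $X$ are preserved in length; here the hypothesis that no shortest $X$-path turns the corner from $e_1$ to $e_2$ at $v$ is what makes such a consistent split possible — any shortest path through $v$ either does not use $e_1$ or does not use $e_2$, so it can be routed to pass through $v$ or through $v'$ without a detour along the new $\varepsilon$-edge. After the split, $v$ (or $v'$) may have degree two and gets suppressed, and one checks that for small enough $\varepsilon$ the total length has not increased while the combinatorial structure has genuinely changed; iterating and combining with a careful bookkeeping argument (or invoking that after finitely many such moves one reaches a realisation not longer than $(G,w)$ in which the offending configuration is gone) yields the contradiction.

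The main obstacle is the vertex-splitting step in part (2): one has to specify exactly which edges incident to $v$ are reassigned to the new vertex $v'$, and then verify that \emph{every} pair $x,x'\in X$ still has a path of the original length $d(x,x')$ and no pair has acquired a shorter one. The clean way to handle this is to partition the edges at $v$ according to ``which side'' of the corner they are used from in shortest $X$-paths through $v$ — a well-defined partition precisely because, by assumption, no shortest $X$-path uses both $e_1$ and $e_2$ — and then to send one block of the partition to $v'$. Establishing that this partition is well-defined and that the resulting graph is a realisation of the same length (up to the $O(\varepsilon)$ correction, which one then argues away by optimality) is the crux; the rest is routine. I would also note that both statements are standard and due to Althöfer, so a short, self-contained proof along these lines suffices.
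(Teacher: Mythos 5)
The paper does not actually prove this lemma; it is quoted from Alth\"ofer with a citation only, so your argument has to stand on its own. Your proof of part (1) does: deleting a non-essential edge preserves all distances between elements of $X$ and strictly decreases the total length, contradicting optimality. That part is correct and complete.

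Part (2), however, has a genuine gap, and it sits exactly where you yourself locate ``the crux''. First, the partition of the edges incident to $v$ that your vertex-splitting requires need not exist. What you need is a two-block partition separating $e_1$ from $e_2$ such that no shortest $X$-path turns at $v$ from one block into the other; equivalently, $e_1$ and $e_2$ must lie in different connected components of the auxiliary graph $H$ on the edges at $v$ whose edges are the pairs used consecutively at $v$ by some shortest $X$-path. Your hypothesis only says that $\{e_1,e_2\}$ is a \emph{non-edge} of $H$; there may be a chain $e_1\sim g_1\sim\cdots\sim g_k\sim e_2$, and then every two-block partition separating $e_1$ from $e_2$ is crossed by some shortest $X$-path, whose realising routes are all lengthened by $\varepsilon$, so the modified graph need not realise $d$ at all. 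Hence ``a well-defined partition precisely because no shortest $X$-path uses both $e_1$ and $e_2$'' is not justified. Second, even granting the partition, your modification adds an edge of weight $\varepsilon>0$ and deletes nothing, so the total length strictly \emph{increases}; suppressing a degree-two vertex does not change the length. You therefore have no mechanism for producing a strictly shorter realisation, which is the only way to contradict optimality, and the appeal to ``iterating and careful bookkeeping'' is precisely the missing argument rather than a routine verification. (The case $v\in X$, where $v$ cannot simply be split into two vertices, is also unaddressed.) As it stands, part (2) is a plausible plan whose decisive steps are absent: you would need either to show that $H$ is in fact complete, or to devise a local modification that strictly decreases length whenever two adjacent edges lie on no common shortest $X$-path.
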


As an immediate consequence of Lemma~\ref{lem:suff:opt:realisation}(2),   we have:

\begin{cor}
\label{cor:triangle:free}
Let $(G,w)$ be an optimal realisation of a 
finite metric space $(X,d)$. Then $G$ is triangle-free.
\end{cor}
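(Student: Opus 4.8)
\emph{Proof plan.} Suppose for contradiction that $G$ contains a triangle on vertices $a,b,c$, with edge weights $r=w(\{a,b\})$, $q=w(\{a,c\})$ and $p=w(\{b,c\})$. The idea is that the two edges $\{a,b\},\{a,c\}$ meeting at $a$ already provide a detour around the edge $\{b,c\}$, and Lemma~\ref{lem:suff:opt:realisation} will make this impossible. First I would apply part~(\ref{lem:suff:opt:realisation:two}) of Lemma~\ref{lem:suff:opt:realisation} to the pair of edges $\{a,b\},\{a,c\}$: there is a shortest path $P$ between two elements of $X$ that contains both, hence contains $b\!-\!a\!-\!c$ as a subpath. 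Since every subpath of a shortest path is a shortest path, this forces $d(b,c)=r+q$; on the other hand $d(b,c)\le w(\{b,c\})=p$ because $\{b,c\}$ is an edge, so $p\ge r+q$.

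Next I would apply part~(1) of Lemma~\ref{lem:suff:opt:realisation} to the edge $\{b,c\}$: there exist $x,x'\in X$ such that $\{b,c\}$ lies on \emph{every} shortest path between $x$ and $x'$. Fix such a path $P'$. Its subpath from $b$ to $c$ is the single edge $\{b,c\}$, which must itself be a shortest $b$–$c$ path, so $p=d(b,c)=r+q$. Now replace the edge $\{b,c\}$ in $P'$ by the detour $b\!-\!a\!-\!c$ to obtain a walk $P''$ from $x$ to $x'$ of length $w(P')-p+r+q=w(P')$, i.e.\ a shortest $x$–$x'$ walk that does not use the edge $\{b,c\}$. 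If $a$ does not already appear on $P'$, then $P''$ is a genuine path and we have produced a shortest $x$–$x'$ path avoiding $\{b,c\}$, contradicting the choice of $x,x'$. If $a$ does appear on $P'$, then $P''$ visits $a$ twice, so it contains a nontrivial closed subwalk; deleting it gives a strictly shorter $x$–$x'$ walk (all weights are positive), contradicting $d_{(G,w)}(x,x')=w(P')$. Either way we reach a contradiction, so $G$ is triangle-free.

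I do not expect a serious obstacle here; the only point requiring care is the last step, where one must argue at the level of \emph{walks} rather than paths, since the detour $b\!-\!a\!-\!c$ has exactly the same length as the edge $\{b,c\}$ and so never shortens $P'$ but is still enough to violate the ``on every shortest path'' conclusion of Lemma~\ref{lem:suff:opt:realisation}(1). As an alternative finishing move one could instead deduce from Lemma~\ref{lem:suff:opt:realisation}(1) that $p,q,r$ satisfy the \emph{strict} triangle inequality, replace the triangle by a three-star with a new centre $m$ and edge weights $\tfrac{q+r-p}{2},\tfrac{p+r-q}{2},\tfrac{p+q-r}{2}$ (all positive), check that this preserves $d$, and observe that its total length $\tfrac{p+q+r}{2}$ is strictly smaller, contradicting optimality directly.
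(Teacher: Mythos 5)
Your proof is correct, and its opening move is exactly the paper's: apply Lemma~\ref{lem:suff:opt:realisation}~\eqref{lem:suff:opt:realisation:two} to two triangle edges meeting at a common vertex, note that the resulting shortest path could otherwise be shortcut through the third edge, and conclude $p\ge q+r$. Where you diverge is the finish. The paper simply repeats this observation at all three corners of the triangle, obtaining $w(e_1)+w(e_2)\le w(e_3)$, $w(e_2)+w(e_3)\le w(e_1)$ and $w(e_3)+w(e_1)\le w(e_2)$; summing gives $w(e_1)+w(e_2)+w(e_3)\le 0$, contradicting positivity of the weights. This bypasses your entire second half --- part~(1) of the lemma, the case split on whether $a$ lies on $P'$, and the walk surgery. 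The underlying reason the paper's version is cleaner is the direction of the replacement: substituting a two-edge subpath $v_1\!-\!v_2\!-\!v_3$ by the single edge $\{v_1,v_3\}$ always yields a genuine path, whereas your substitution of one edge by a two-edge detour can revisit $a$, which is precisely what forces you into the walk-versus-path discussion (which you do handle correctly, including the degenerate case $p=r+q$). One caveat on your sketched alternative: the strict triangle inequality needed to make the three-star weights positive does not follow from Lemma~\ref{lem:suff:opt:realisation}~(1) alone --- that only rules out $p>q+r$; excluding equality requires the same walk argument, or more simply the summed inequalities above, which show the three non-strict ``reverse'' inequalities cannot coexist.
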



As mentioned in the introduction, not all 
optimal realisations are homeomorphic to 
subgraphs of the tight-span. 
However, we will now present some properties of
optimal realisations that will guarantee this property. 
If $(G,w)$ is a weighted graph and $A\subseteq V(G)$, 
we denote by $\Gamma(G,w;A)$ the set of all pairwise distinct 
shortest paths in $G$ connecting elements of $A$. 
By~\cite[Proposition~7.1]{KLMW}, for each metric space $(X,d)$ there exists 
a \emph{path-saturated} optimal realisation $(G,w)$ of $(X,d)$ such that  
$|\Gamma(G,w;X)|\geq |\Gamma(G',w';X)|$ holds 
for all optimal realisations $(G',w')$ of $(X,d)$. If in addition the number of vertices $V(G)$ is 
minimal among all path-saturated realisations of $(X,d)$, 
then $(G,w)$ is called a 
\emph{minimal path-saturated realisation} of $(X,d)$.

Now, if $(X,d)$ is a (not necessarily finite) 
metric space, a function $\gamma:[0,1]\to X$ is 
called a \emph{geodesic} in $(X,d)$ if for all 
$a<b<c\in [0,1]$ one has 
$d(\gamma(a),\gamma(c))=d(\gamma(a),\gamma(b))+d(\gamma(b),\gamma(c))$. 
Note that this implies that $\gamma$ is continuous.
A map $\psi:X \to X'$ between two arbitrary metric spaces 
$(X,d)$ and $(X',d')$ is called  {\em non-expansive}, if 
$d'\big(\psi(x_1),\psi(x_2)\big) \le d(x_1,x_2)$ 
for all $x_1,x_2 \in X$. If $\psi^{-1}$ exists and 
is non-expansive, too, $\psi$ is an \emph{isometry} 
and $(X,d)$ and $(X',d')$ are said to be \emph{isometric}.

For a weighted graph $(G,w)$ we denote by $||(G,w)||$ 
its {\em geometric realisation},
that is, the metric space obtained by regarding each 
edge $e\in E(G)$ as a real interval of length $w(e)$ 
and gluing them together at
the vertices of $G$ (see, e.g., Daverman and 
Sher \cite[p. 547]{Dav-She-02} for details of 
this construction). For $X\subset V(G)$ a 
function $\gamma:[0,1]\to||(G,w)||$ is called 
an \emph{$X$-geodesic} if it is a geodesic 
between two points of $X$ interpreted as points in $||(G,w)||$.

The following theorem gives us a way to relate 
the geometric realisation of an
optimal realisation of a metric with its tight-span. The 
first part is due to Dress \cite[Theorem~5]{Dress84}, 
and the second part is given in~\cite[Proposition~7.1]{KLMW}.

\begin{thm}
\label{thm:inj:geometric:graph}
Let $(G,w)$ be an optimal realisation of a finite 
metric space $(X,d)$. Then there exists a 
non-expansive map $\psi$ from $||(G,w)||$ 
to $(\ct(d),d_\infty)$ such that $\psi(x)=\kappa(x)$ 
for all $x\in X$. If, in addition, $(G,w)$ is 
path-saturated, then $\psi$ is injective.
\end{thm}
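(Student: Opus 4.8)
The plan is to prove Theorem~\ref{thm:inj:geometric:graph} in two stages, treating the two asserted properties separately. For the first part---the existence of a non-expansive map $\psi\colon\|(G,w)\|\to(\ct(d),d_\infty)$ with $\psi|_X=\kappa$---I would build $\psi$ edge by edge. For each vertex $v\in V(G)$ define $\psi(v)\in\RR^X$ by $\psi(v)(x):=d_{(G,w)}(v,x)$; since $(G,w)$ realises $d$, this agrees with $\kappa$ on $X$, and the triangle inequality in $(G,w)$ gives $\psi(v)(x)+\psi(v)(y)\ge d(x,y)$, so $\psi(v)\in\cp(d)$. The key point is that $\psi(v)$ is in fact \emph{minimal} in $\cp(d)$, hence lies in $\ct(d)$: this is where optimality of $(G,w)$ enters, via Lemma~\ref{lem:suff:opt:realisation}(1), which guarantees that every edge at $v$ is used tightly by some pair $x,x'\in X$, forcing each coordinate of $\psi(v)$ down to the minimal possible value. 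Then extend $\psi$ affinely (i.e. isometrically, parametrising by arc length) along each edge $e=\{u,v\}$: a point at distance $t$ from $u$ along $e$ maps to the function $x\mapsto\min\{t+\psi(u)(x),\,w(e)-t+\psi(v)(x)\}$. One checks this is well-defined (independent of which endpoint one measures from), lands in $\ct(d)$, and that $\|\psi(u)-\psi(v)\|_\infty\le w(e)$, giving non-expansiveness along each edge and hence, by concatenating along paths, globally.

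For the second part, assume in addition that $(G,w)$ is path-saturated; I must show $\psi$ is injective. Suppose not, so $\psi(p)=\psi(q)$ for distinct $p,q\in\|(G,w)\|$. The strategy is to modify $(G,w)$ by ``identifying'' $p$ and $q$ to obtain a new realisation $(G',w')$ of $(X,d)$ that is no longer than $(G,w)$ (hence still optimal, since $(G,w)$ is optimal) but has strictly more $X$-geodesics, contradicting path-saturation. Concretely, pick a shortest path in $\|(G,w)\|$ between $p$ and $q$ of length $\ell>0$; since $\psi$ is non-expansive and $\psi(p)=\psi(q)$, collapsing this path (or gluing $p$ to $q$ along it) cannot increase distances between elements of $X$---indeed it cannot decrease them either, because $\psi$ already witnesses an isometric-at-$X$ picture---so the distances on $X$ are preserved while the total edge length does not increase. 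The delicate bookkeeping is to check that after this surgery one genuinely gains a shortest path (two $X$-geodesics that were distinct in $(G,w)$ because they passed through $p$ versus $q$ now become ``the same'' in a way that frees up a genuinely new shortest connection, or an $X$-geodesic through $p$ now also routes through the image of $q$), and that the count $|\Gamma(G',w';X)|$ strictly exceeds $|\Gamma(G,w;X)|$.

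An alternative, cleaner route for the injectivity part---and the one I would actually try first---is to argue by a dimension/length-counting comparison rather than explicit surgery. If $\psi$ were not injective, then the non-expansive map $\psi$ would collapse some positive-length portion of $\|(G,w)\|$, so the image $\psi(\|(G,w)\|)$ is a subcomplex of $\ct(d)$ that is ``shorter'' than $(G,w)$; composing with the known fact (from \cite[Theorem~5]{Dress84}, recalled in the introduction) that $(G_d,w_\infty)$ is homeomorphic to a realisation, one extracts from $\psi(\|(G,w)\|)$ a realisation of $(X,d)$ of length at most $l(G,w)$ but with a combinatorially different, more ``degenerate'' path structure, again violating path-saturation after a routine perturbation argument. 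Either way, \textbf{the main obstacle is the injectivity claim}: establishing that a failure of injectivity of $\psi$ can be converted into a genuine violation of path-saturation requires care in tracking how the set of shortest $X$-paths changes under the identification, since one must rule out the possibility that collapsing $p$ to $q$ destroys as many shortest paths as it creates. Controlling this is exactly the content of \cite[Proposition~7.1]{KLMW}, and I would expect the bulk of the proof to be a careful case analysis of how geodesics in $\|(G,w)\|$ sit relative to the fibre $\psi^{-1}(\psi(p))$.
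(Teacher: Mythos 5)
The paper does not actually prove this statement: the first sentence is quoted from \cite[Theorem~5]{Dress84} and the injectivity claim from \cite[Proposition~7.1]{KLMW}, so your proposal has to be judged as a stand-alone proof. As such it contains a genuine gap in the \emph{first} part, not only in the injectivity part that you flag as the main obstacle. Your map sends a point $p$ at distance $t$ from $u$ on the edge $e=\{u,v\}$ to $x\mapsto\min\{t+d_{(G,w)}(u,x),\,w(e)-t+d_{(G,w)}(v,x)\}$, which is exactly the distance function $h_p:=d_{\|(G,w)\|}(p,\cdot)|_X$. This function always lies in $\cp(d)$, but it is \emph{not} in general minimal, and Lemma~\ref{lem:suff:opt:realisation}(1) does not make it so: minimality requires that for \emph{every} $x\in X$ there is some $y\in X$ with $p$ on a shortest $x$--$y$ path, whereas the lemma only produces, for each edge at $p$, \emph{one} pair $x,x'$ whose geodesics use that edge. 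Concretely, take $X=\{1,\dots,5\}$ with $d$ the graph metric of the unit $5$-cycle (a two-decomposable metric; the cycle itself is its optimal realisation). For $p$ the midpoint of the edge $\{1,2\}$ one gets $h_p=(\tfrac12,\tfrac12,\tfrac32,\tfrac52,\tfrac32)$, and the coordinate at $4$ has no tight partner: $h_p(4)+h_p(y)>d(4,y)$ for every $y$. So $h_p\notin\ct(d)$ and your map does not land in the tight-span. The missing idea --- and the one Dress actually uses --- is to compose $p\mapsto h_p$ (which is non-expansive into $(\cp(d),d_\infty)$) with the canonical non-expansive retraction $\cp(d)\to\ct(d)$; since that retraction fixes $\ct(d)$ pointwise it fixes each $\kappa(x)$, and the composite has all the stated properties. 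The same caveat applies to your claim that $\psi(v)=h_v$ is minimal at auxiliary vertices.

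For the injectivity part your first sketch (identify two points $p\neq q$ with $\psi(p)=\psi(q)$ and contradict path-saturation) is the right idea, but the justification you give for why the quotient still realises $d$ is not the correct one. What actually works is: for any $x,y\in X$, non-expansiveness gives $d(x,y)=d_\infty(\psi(x),\psi(y))\le d_\infty(\psi(x),\psi(p))+d_\infty(\psi(q),\psi(y))\le d_{\|(G,w)\|}(x,p)+d_{\|(G,w)\|}(q,y)$, so no shortcut through the identified point can undercut $d$; hence the quotient is again an optimal realisation of the same total length. The remaining work --- producing a \emph{strictly} larger set of shortest $X$-paths (e.g.\ by applying Lemma~\ref{lem:suff:opt:realisation}(2) to one edge coming from $p$ and one from $q$ at the merged vertex, after subdividing if $p$ or $q$ is an interior point) --- is precisely what you leave open and what \cite[Proposition~7.1]{KLMW} supplies. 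So the proposal as written establishes neither half; the first half fails at a specific step, and the second is an acknowledged sketch.
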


\subsection{Splits and total-decomposability}

A {\em split} $S=\{A,B\}$ of a finite set $X$ is a 
bipartition of $X$, that is $A\cup B=X$ 
and $A\cap B=\emptyset$. A {\em weighted split system} 
$(\Ss,\ws)$ on~$X$ is a pair consisting of a set 
$\Ss$ of splits of $X$, and a weight function 
$\ws: \Ss\rightarrow \R_{>0}$. For all $x,y\in X$, we set 
$\Ss(x,y)=\smallSetOf{\{A,B\}\in\Ss}{x\in A,y\in B\text{ or }x\in B,y\in A}$ 
and define
\[
d_{(\Ss,\ws)}(x,y)=\sum_{S\in \Ss(x,y)}{\ws(S)}\,.
\]
If $\Ss(x,y)\not=\emptyset$ for all distinct $x,y\in X$, 
the pair $(X,d_{(\Ss,\ws)})$ becomes a finite metric space.

Two splits $\{A,B\}$ and $\{A',B'\}$ of $X$ are 
called {\em incompatible} if none of the four 
intersections $A\cap A'$, $A\cap B'$, $B\cap A'$ and  $B\cap B'$ is empty.
A weighted split system $(\Ss,\ws)$ is called 
(1) {\em two-compatible} if $\Ss$ does not contain three 
pairwise incompatible splits, (2) {\em weakly compatible} 
if for any three splits $S_1,S_2,S_3$ in $\Ss$, there 
exist $A_i \in S_i$, for each $i\in \{1,2,3\}$, such
that $A_1\cap A_2 \cap A_3=\emptyset$, and (3) 
{\em octahedral-free} 
if there exists no partition $X = X_1\cup \cdots \cup  X_6$ 
of $X$ into six non-empty disjoint subsets $X_i$, $1\leq i \leq 6$, 
such that each one of the following four splits:
\begin{eqnarray*}
S_1 =\{X_1\cup X_2\cup X_3, X_4\cup X_5 \cup X_6\}, & 
S_2 =\{X_2\cup X_3\cup X_4, X_5\cup X_6 \cup X_1\}, \\
S_3 =\{X_3\cup X_4\cup X_5, X_6\cup X_1 \cup X_2\}, &
S_4 =\{X_1\cup X_3\cup X_5, X_2\cup X_4 \cup X_6\}
\end{eqnarray*}
belongs to $\Ss$. Note that it is easily seen
that two-compatible split systems are
octahedral-free.

It can be shown
(cf. \cite{DHM01}) that a metric space $(X,d)$ is 
totally-decomposable if and only if there exists a weakly 
compatible weighted split system $(\Ss,\ws)$ 
on $X$ such that $d=d_{(\Ss,\ws)}$. 
Furthermore, if
$(X,d)$ is totally-decomposable, then $d$ 
has dimension at most two if and only if $(\Ss,\ws)$ is
two-compatible (cf. \cite{DHM01}). From now on 
we will call two-dimensional, totally-decomposable
metric spaces \emph{two-decomposable}.

\subsection{Polytopal complexes}\label{ssec:polyhedron}

We now recall some definitions about polytopes\linebreak 
(see~\cite{Ziegler} for further details). 
A {\em polyhedron} $P$ is  the intersection 
of a finite collection of halfspaces in a real 
vector space $\mathbb{V}$, and a {\em polytope} is a bounded 
polyhedron. For any linear functional $L:\mathbb{V} \to \RR$ 
the set $F=\smallSetOf{x\in P}{L(x)=\max_{y\in P}{L(y)}}$ 
is called a \emph{face} of $P$, as is the empty set. 
The zero- and one-dimensional faces are 
called {\em vertices} and {\em edges} of~$P$ and 
they naturally gives rise to a \emph{graph} of the 
polyhedron. A {\em cell complex} 
(or \emph{polytopal complex}) $\cp$ is a finite 
collection of polytopes (called \emph{cells}) 
such that each face of a member of $\cp$ is 
itself a member of $\cp$, and the intersection 
of two members of $\cp$ is a face of each. 
We denote the set of vertices of $\cp$ by $V(\cp)$. Two
cell complexes $\cp,\cp'$ are {\em isomorphic} 
if there exists an bijection $\pi$ (called 
\emph{cell-complex isomorphism}) between them 
such that for all $F,F'\in \cp$ the cell $F$ is a face 
of $F'$ if and only if $\pi(F)$ is a face of 
$\pi(F')$. 

For a finite metric space $(X,d)$, the set $P(d)$ 
defined in the introduction is obviously a 
polyhedron and it can be easily observed 
that $T(d)$ is the union of bounded faces 
of $P(d)$ (cf. \cite{Dress84})  and hence naturally carries the 
structure of a cell complex $\comt(d)$.

\subsection{The Buneman complex}
\label{subsect:BC}
The Buneman complex, also known as a median complex~\cite{Chepoi00},  is a 
cell complex that can be associated to 
any weighted split system, and 
that has proven useful in, for example, 
understanding the structure of the tight-span of a
totally-decomposable metric \cite{DHM02}.
Given a weighted split system $(\Ss,\ws)$ on $X$ 
we define its {\em support} to be the set 
$\supp(\ws):=\smallSetOf{A\subseteq X}{ 
\text{there exists }S\in\Ss\text{ with } A\in S}$. 
Consider the polytope (which is a hypercube)
\[
\ch(\Ss,\ws):=\SetOf{\mu\in \R^{\supp(\ws)}_{\geq 0}}
{\mu(A)+\mu(B)=\ws(S)\text{ for all }S\in \Ss}\,.
\]
Its subset
\[
\cb(\Ss,\ws):=\SetOf{\mu\in \ch(\ws)}{``\mu(A)\not = 0 
\not = \mu(B)\,\text{and}\,\,
A\cup B=X\textquotedblright \Rightarrow A\cap B=\emptyset}
\]
carries the structure of a cell complex and is the 
{\em Buneman complex} $\comb(\Ss,\ws)$ of $(\Ss,\ws)$. 
Obviously, the set of vertices of $\ch(\Ss,\ws)$ 
consists of those $\mu\in\ch(\Ss,\ws)$ with 
$\mu(A)\in \{0,\ws(\{A,X\setminus A\})\}$ 
for all $A\in \supp(\ws)$. It is easily 
seen that the set $V(\cb(\Ss,\ws))$ of 
vertices of $\cb(\Ss,\ws)$ consists of 
the  $\mu\in\cb(\Ss,\ws)$ with this property.

Setting
\[
d_1(\mu,\nu):=\frac{1}{2} \sum_{A\in \supp(\ws)} |\mu(A)\setminus \nu(A)|
\]
for all $\mu,\nu\in \cb(\Ss,\ws)$, we obtain
a metric space $(\cb(\Ss,\ws),d_1)$ and 
the map $\Phi:X\to \cb(\Ss,\ws)$ defined via
\begin{align}\label{eq:Phi}
\Phi(x)(A):=
\begin{cases}
 \ws(\{A,X\setminus A\}) & \text{if $x\in A$}; \\
 0     & \text{else},
\end{cases}
\end{align}
 for any $x\in X$ and $A\in \supp(\ws)$ 
is an isometric embedding from $(X,d_{(\Ss,\ws)})$ 
to $(\cb(\Ss,\ws),d_1)$.

There exists a natural map 
$\Lambda:\RR^{\supp(\ws)} \to \RR^X,\mu\mapsto f_\mu$ where
\[
f_\mu(x)=\sum_{A\in\supp(\ws)}\mu(A)\quad\text{for all }x\in X\,.
\] It is easily seen that $\Lambda(\Phi(x))=\kappa(x)$ 
for all $x\in X$. Depending on properties of the 
split system $(\Ss,\ws)$, this map takes elements from 
$\ch(\Ss,\ws)$ to elements of $\cp(d_{(\Ss,\ws)})$ 
or even from $\cb(\Ss,\ws)$ to elements of $\ct(d_{(\Ss,\ws)})$; 
see \cite{DHM02,MR1630743,MR1774971} for details. 
In case $(\Ss,\ws)$ 
is weakly compatible and octahedral"=free 
the following holds:

\begin{thm}[Theorem~3.1 in \cite{DHM02}]\label{thm:isom}
If $(\Ss,\ws)$ is a weakly compatible, 
octahedral"=free weighted split system on $X$, 
then the map $\Lambda|_{\cb(\Ss,\ws)}$ is a 
bijection onto $\ct(d_{(\Ss,\ws)})$ that 
induces a cell"=complex isomorphism 
$\Lambda':\comb(\Ss,\ws)\to\comt(d_{(\Ss,\ws)})$.
\end{thm}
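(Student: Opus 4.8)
The plan is to use throughout that $\Lambda$ is a \emph{linear} map. I would establish, separately, that $\Lambda(\cb(\Ss,\ws))\subseteq\ct(d)$ (writing $d:=d_{(\Ss,\ws)}$), that $\Lambda$ is injective on the affine hull of $\ch(\Ss,\ws)$, and that $\Lambda|_{\cb(\Ss,\ws)}$ maps \emph{onto} $\ct(d)$. The cell-complex statement then follows formally: an injective linear map restricts to an affine isomorphism from each cell of $\comb(\Ss,\ws)$ onto a polytope, affine isomorphisms preserve inclusions of faces, and -- since $\comt(d)$ is the complex of bounded faces of $\cp(d)$ and $\Lambda|_{\cb(\Ss,\ws)}$ is then a bijection onto $\ct(d)$ with affine inverse -- the images of the cells of $\comb(\Ss,\ws)$ are precisely the cells of $\comt(d)$, so $F\mapsto\Lambda(F)$ is the induced isomorphism $\Lambda'$.

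For $\Lambda(\cb(\Ss,\ws))\subseteq\ct(d)$ I would first note the (easy) inclusion $\Lambda(\ch(\Ss,\ws))\subseteq\cp(d)$: for $\mu\in\ch(\Ss,\ws)$ and $x,y\in X$, grouping the sum defining $f_\mu(x)+f_\mu(y)$ according to the splits of $\Ss$, a split separating $x$ and $y$ contributes exactly $\ws(S)$ and every other split contributes a non-negative amount, so $f_\mu(x)+f_\mu(y)\ge\sum_{S\in\Ss(x,y)}\ws(S)=d(x,y)$. To obtain minimality I would invoke the standard fact that $\ct(d)$ is exactly the set of $f\in\cp(d)$ for which every $x\in X$ admits a $y\in X$ with $f(x)+f(y)=d(x,y)$ (otherwise $f(x)$ could be decreased while remaining in $\cp(d)$). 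Given $\mu\in\cb(\Ss,\ws)$ and $x\in X$, the defining condition of the Buneman complex, together with weak compatibility, should let me exhibit a $y$ for which every split not separating $x$ and $y$ contributes $0$ in the estimate above, turning it into an equality; this is the first place the hypotheses on $(\Ss,\ws)$ are used.

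For injectivity of $\Lambda$ on the affine hull of $\ch(\Ss,\ws)$, I would parametrise $\ch(\Ss,\ws)$ by one coordinate per split of $\Ss$ and write out the matrix of $\Lambda$ in these coordinates; the claim is that this matrix has full rank, and octahedral-freeness is precisely what makes this true -- an octahedral sub-configuration of $(\Ss,\ws)$ produces a non-trivial vector in the kernel of $\Lambda$, which is why this hypothesis cannot be dropped. Given injectivity, $\bigl(\Lambda|_{\cb(\Ss,\ws)}\bigr)^{-1}$ is the restriction of a fixed linear map and hence affine.

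The hard part is surjectivity of $\Lambda|_{\cb(\Ss,\ws)}$ onto $\ct(d)$. My plan is, for a given $f\in\ct(d)$, to reconstruct a point $\mu\in\ch(\Ss,\ws)$ with $f_\mu=f$ by reading off, split by split, ``how much of $f$ crosses $S$'' for each $S\in\Ss$, and then to verify that $\mu$ satisfies the Buneman condition. This reconstruction should be well defined because the tight-span of a weakly compatible metric has a controlled cell structure -- every point of $\ct(d)$ lies in the relative interior of a cell of $\comt(d)$ that is ``cut out'' by a sub-collection of the splits of $\Ss$ (cf.\ \cite{DHM01,MR1630743,MR1774971}) -- and octahedral-freeness should once more be the property that forces these local data to be mutually consistent and that keeps $\mu$ inside $\cb(\Ss,\ws)$. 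I expect this gluing step to be the main obstacle. Once it is in place, $\Lambda(\Phi(x))=\kappa(x)$ shows that $\Lambda$ already sends the distinguished vertices $\Phi(x)$ of $\comb(\Ss,\ws)$ into $\kappa(X)\subseteq V(\comt(d))$, and the cell-complex isomorphism follows as in the first paragraph.
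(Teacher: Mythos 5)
This statement is quoted from \cite{DHM02} and the paper offers no proof of its own, so your proposal has to stand on its own merits. It does not: the injectivity step is based on a claim that is false in general. You propose to show that $\Lambda$ is injective on the affine hull of $\ch(\Ss,\ws)$, i.e.\ that the $X\times\Ss$ matrix of signs has full column rank, and you assert that octahedral-freeness is exactly what guarantees this. But the affine hull of $\ch(\Ss,\ws)$ has dimension $|\Ss|$ while the target $\RR^X$ has dimension $|X|$, so full column rank is impossible whenever $|\Ss|>|X|$. This happens for systems squarely covered by the theorem: the full circular split system on $5$ points has $\binom{5}{2}=10$ splits, is weakly compatible, and is trivially octahedral-free (an octahedral configuration needs six nonempty blocks), yet the theorem correctly asserts that $\Lambda|_{\cb(\Ss,\ws)}$ is a bijection onto the tight-span. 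The point is that $\cb(\Ss,\ws)$ is a union of \emph{low-dimensional} faces of the box $\ch(\Ss,\ws)$ (cf.\ Theorem~\ref{thm:two-decomposable}\eqref{thm:two-decomposable:dim} in the two-compatible case), and injectivity of $\Lambda$ on this set is a statement about pairs of points lying in \emph{different} cells of the complex; it cannot be reduced to the rank of a single matrix on the ambient affine hull. (Your observation that an octahedral configuration produces a kernel vector is correct and explains why octahedral-freeness is \emph{necessary}, but the converse direction of your rank claim is simply wrong.) This error propagates: your ``formal'' derivation of the cell-complex isomorphism uses that $\bigl(\Lambda|_{\cb(\Ss,\ws)}\bigr)^{-1}$ is the restriction of a fixed linear map, which is exactly what fails once global injectivity is lost; with only set-theoretic injectivity on $\cb(\Ss,\ws)$ one must still argue separately that images of cells are cells and that face relations are reflected, which is part of the real content of Theorem~3.1 of \cite{DHM02}.

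Two further gaps, smaller but real: in the inclusion $\Lambda(\cb(\Ss,\ws))\subseteq\ct(d_{(\Ss,\ws)})$ the decisive step --- exhibiting, for each $\mu\in\cb(\Ss,\ws)$ and $x\in X$, a $y$ with $f_\mu(x)+f_\mu(y)=d(x,y)$ from the Buneman condition and weak compatibility --- is only announced, not carried out; and the surjectivity of $\Lambda|_{\cb(\Ss,\ws)}$, which you yourself identify as the main obstacle, is left entirely as a plan. As written, the proposal establishes only the easy inclusion $\Lambda(\ch(\Ss,\ws))\subseteq\cp(d_{(\Ss,\ws)})$.
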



Recall that if $\Ss$ is two-compatible, then it is 
weakly compatible and octahedral"=free.
In fact, in the two-compatible case we know even 
more about the relation between $\cb(\Ss,\ws)$ 
and $\ct(d_{(\Ss,\ws)})$.

\begin{thm}
\label{thm:two-decomposable}
Suppose that $(\Ss,\ws)$ is a two-compatible 
weighted split system on~$X$. Then the following hold:
\begin{enumerate}
\item \label{thm:two-decomposable:dim} 
The cell complex $\comb(\Ss,\ws)$
is at most two-dimensional and all two dimensional cells are quadrangles.
\item \label{thm:two-decomposable:isom} The 
map $\Lambda|_{\cb(\Ss,\ws)}:\cb(\Ss,\ws)\to\ct(d_{(\Ss,\ws)})$ 
is an isometry of the metric 
spaces $(\cb(\Ss,\ws),d_1)$ and $(\ct(d_{(\Ss,\ws)}),d_\infty)$.
\end{enumerate}
\end{thm}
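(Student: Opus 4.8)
The plan is to deduce Theorem~\ref{thm:two-decomposable} by combining the general isomorphism of Theorem~\ref{thm:isom} with a direct combinatorial analysis of the cells of $\comb(\Ss,\ws)$ coming from two-compatibility. First I would note that since a two-compatible split system is weakly compatible and octahedral-free, Theorem~\ref{thm:isom} already gives a cell-complex isomorphism $\Lambda':\comb(\Ss,\ws)\to\comt(d_{(\Ss,\ws)})$; so the content of part~\eqref{thm:two-decomposable:dim} is a statement about the shapes of cells, and part~\eqref{thm:two-decomposable:isom} is the refinement from ``bijection'' to ``isometry'' of the two specified metrics.

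For part~\eqref{thm:two-decomposable:dim} I would analyse which subsets of $\supp(\ws)$ can be simultaneously ``active'' (i.e.\ have coordinate strictly between $0$ and the split weight) at a point $\mu\in\cb(\Ss,\ws)$. A cell of the hypercube $\ch(\Ss,\ws)$ is obtained by fixing some coordinates to their extreme values and letting the others vary; its dimension is the number of free split-coordinates. The defining condition of $\cb(\Ss,\ws)$ forbids two splits $S=\{A,B\}$, $S'=\{A',B'\}$ from both being active unless one of the four intersections $A\cap A'$, $A\cap B'$, $B\cap A'$, $B\cap B'$ is empty — that is, unless $S$ and $S'$ are compatible. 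Hence the active splits at any point form a compatible (pairwise) sub-system. But a pairwise-compatible split system on $X$ is a classical object: it is "circular"/"tree-like", and in particular any three pairwise-compatible splits cannot be ``in general position'' — I would show that at most two coordinates of a point of $\cb(\Ss,\ws)$ can be active, which forces every cell to be at most two-dimensional, and that a two-dimensional cell has exactly two active splits $S,S'$, giving a product of two intervals, i.e.\ a quadrangle. (Here I would be careful that two-compatibility, not full compatibility, is the hypothesis: the point is that \emph{locally} — at a single $\mu$ — the active splits are pairwise compatible regardless, so the global two-compatibility hypothesis is in fact stronger than needed for this part, or alternatively one argues directly that three pairwise-incompatible splits cannot occur and three pairwise-compatible ones cannot be jointly active.)

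For part~\eqref{thm:two-decomposable:isom} the strategy is to upgrade the set-bijection $\Lambda|_{\cb(\Ss,\ws)}$ to an isometry by checking it on each cell and invoking a gluing/geodesic argument. On a fixed cell, $\Lambda$ is the restriction of the linear map $\mu\mapsto f_\mu$; I would compute that for $\mu,\nu$ lying in a common cell, $d_\infty(\Lambda\mu,\Lambda\nu)=\sup_{x\in X}|f_\mu(x)-f_\nu(x)|$ equals $d_1(\mu,\nu)=\tfrac12\sum_A|\mu(A)-\nu(A)|$, using that within a single (at most two-dimensional, quadrangular) cell only two split-coordinates vary and the supremum over $x$ is attained by choosing $x$ on the appropriate sides of those two splits — this is a short explicit computation on the square $[0,\ws(S)]\times[0,\ws(S')]$. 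Then, since both $(\cb(\Ss,\ws),d_1)$ and $(\ct(d),d_\infty)$ are geodesic metric spaces whose geodesics can be subdivided to run within cells (the cell complex is finite, and $\Lambda'$ is a cell-complex isomorphism so it carries a cellwise-geodesic path to a cellwise-geodesic path of the same length), length is preserved, hence distance is preserved; combined with non-expansiveness in both directions this gives the isometry.

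The main obstacle I anticipate is part~\eqref{thm:two-decomposable:dim}: pinning down exactly which splits can be simultaneously active and proving rigorously that this forces dimension $\le 2$ with quadrangular $2$-cells. The delicate point is translating the ``$A\cup B=X\Rightarrow A\cap B=\emptyset$'' clause in the definition of $\cb$ into the statement ``the active splits at $\mu$ are pairwise compatible'', and then ruling out three jointly-active pairwise-compatible splits — one has to use that the three active $A_i$'s, being pairwise compatible and all containing (or all excluded from) various elements, would force an empty intersection somewhere and collapse one coordinate to an endpoint. The global isometry in part~\eqref{thm:two-decomposable:isom} is then comparatively routine given Theorem~\ref{thm:isom} and the cell-by-cell computation, modulo the standard fact that a finite polytopal complex with these metrics is geodesic with geodesics subdividable along cells.
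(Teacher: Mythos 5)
The central combinatorial claim in your treatment of part~(1) is reversed, and this is a genuine error rather than a slip. Write $S=\{A,A'\}$ and $S'=\{B,B'\}$ for two \emph{distinct} splits that are both active at $\mu$ (all four of $\mu(A),\mu(A'),\mu(B),\mu(B')$ nonzero). For sides $C\in S$ and $D\in S'$ one has $C\cup D=X$ if and only if $(X\setminus C)\cap(X\setminus D)=\emptyset$; and if moreover $C\cap D=\emptyset$, then $D=X\setminus C$ and hence $S=S'$. So for distinct active splits the defining implication of $\cb(\Ss,\ws)$ can only be satisfied vacuously: no pair of sides may have union $X$, i.e.\ all four intersections $A\cap B$, $A\cap B'$, $A'\cap B$, $A'\cap B'$ are nonempty. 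Simultaneously active splits are therefore pairwise \emph{incompatible}, not pairwise compatible as you assert. (Sanity check: for two compatible splits the Buneman complex is two edges of the square meeting in a corner, so at most one split is active at any point; for two incompatible splits it is the full two-dimensional square.) Your ensuing reasoning --- that the active splits form a tree-like compatible system, and that two-compatibility is ``stronger than needed'' --- is built on this false premise; in fact two-compatibility is used exactly: active splits are pairwise incompatible, two-compatibility forbids three pairwise incompatible splits, hence at most two coordinates are free on any cell and every two-cell is a product of two intervals, i.e.\ a quadrangle. Note also that your cellwise computation in part~(2) secretly depends on the corrected statement: the supremum over $x$ equals $|a|+|b|$ precisely because all four intersections of sides of the two active splits are nonempty, so every sign pattern is realised by some $x$.

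For comparison, the paper does not argue combinatorially at all: it cites Lemma~2.1 and Theorem~1.1(c) of the Dress--Huber--Moulton paper for the dimension bound and the isometry, and Corollary~7.3 of Huber--Koolen--Moulton together with Theorem~\ref{thm:isom} for the quadrangle shape. A self-contained proof along your lines is legitimate and, once the incompatibility direction is fixed, part~(1) goes through cleanly. The remaining soft spot is in part~(2): the ``standard fact'' you defer to --- that $d_1$ on $\cb(\Ss,\ws)$ and $d_\infty$ on $\ct(d_{(\Ss,\ws)})$ agree with the intrinsic path metrics of these (generally non-convex) complexes, so that a cellwise-isometric cell-complex isomorphism is a global isometry --- is not automatic for a restricted ambient metric and is essentially the content of the equivalence the paper cites. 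You would need to establish it, e.g.\ by exhibiting for any $\mu,\nu\in\cb(\Ss,\ws)$ a path through cells whose $d_1$-length equals the ambient $\ell_1$-distance, rather than invoke it.
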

\begin{proof}
\noindent (1) By \cite[Lemma~2.1]{DHM01}, the 
dimension of $\cb(\Ss,\ws)$ is bounded by two 
if $(\Ss,\ws)$ is two-compatible. Corollary~7.3 
in \cite{HKM06} states that for weakly 
compatible split systems $(\Ss,\ws)$ all cells in $\comt(d_{(\Ss,\ws)})$ 
are isomorphic to either hypercubes or rhombic dodecahedra. 
Hence (1) follows from Theorem~\ref{thm:isom}.

\noindent (2) This follows from \eqref{thm:two-decomposable:dim} 
in connection with the equivalence of 
(i) and (vi) in \cite[Theorem~1.1~(c)]{DHM01}.
\end{proof}


\subsection{The tight-span}

We have seen in the introduction that any metric space $(X,d)$ 
can be embedded into its tight-span and that 
$d(x,y)=d_\infty(x,y)$ $=d_{(G_d,w_\infty)}(x,y)$ holds for
all $x,y\in X$. A key in the proof of our main 
theorem will be the following observation by Hirai~\cite{Hirai09}, which shows 
that for two-decomposable metrics the latter equality 
holds for general vertices of the tight-span. For the sake of completeness, a proof is included here. Note that the result is not true in general
for metrics with dimension 3 or more~(see, e.g.~\cite[Theorem 3]{koolen2007concerning}). \\

\begin{prop}
~\cite[Proposition 4.2]{Hirai09}
\label{prop:two-decomp:tight-span}
Let $(X,d)$ be a two-decomposable metric space. 
Then for any two elements $f,g\in V(\ct(d))$, we 
have $d_\infty(f,g)=d_{(G_d,w_\infty)}(f,g)$.
\end{prop}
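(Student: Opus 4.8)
The plan is to work inside the Buneman complex, using the isometry $\Lambda|_{\cb(\Ss,\ws)}$ from Theorem~\ref{thm:two-decomposable}~\eqref{thm:two-decomposable:isom} to translate the statement into one about $(\cb(\Ss,\ws),d_1)$, where $(\Ss,\ws)$ is the (two-compatible) weighted split system with $d=d_{(\Ss,\ws)}$. Under this isometry the one-skeleton graph $(G_d,w_\infty)$ corresponds to the graph on $V(\cb(\Ss,\ws))$ whose edges are the one-dimensional cells of $\comb(\Ss,\ws)$, weighted by $d_1$. So it suffices to show that for any two vertices $\mu,\nu$ of the Buneman complex there is a path in this one-skeleton from $\mu$ to $\nu$ whose $d_1$-length equals $d_1(\mu,\nu)$; that is, the vertex set of $\cb(\Ss,\ws)$ with its edge-graph is itself an (isometric-on-vertices) ``realisation'' of the submetric induced on $V(\cb(\Ss,\ws))$.

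The key structural input is the explicit description of vertices and edges of the Buneman complex. A vertex $\mu\in V(\cb(\Ss,\ws))$ is determined by the subset $\Ss_\mu\subseteq\Ss$ of splits on which $\mu$ is ``switched'': writing $\mu_S$ for the chosen block, $\mu(\mu_S)=\ws(S)$ and $\mu(X\setminus\mu_S)=0$, subject to the Buneman condition that the chosen blocks pairwise intersect. Two vertices $\mu,\nu$ span an edge of $\comb(\Ss,\ws)$ precisely when they differ on exactly one split $S$ (and the common data of the others, together with either choice on $S$, still satisfies the Buneman condition); the $d_1$-length of that edge is then $\ws(S)$. Consequently $d_1(\mu,\nu)=\sum_{S\in D}\ws(S)$ where $D=D(\mu,\nu)$ is the set of splits on which $\mu$ and $\nu$ disagree, and the problem reduces to: one can reorder $D$ as $S_1,\dots,S_k$ so that flipping $\mu$ one split at a time through this order produces, at each intermediate stage, a valid Buneman vertex. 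Each flip then contributes an edge of weight $\ws(S_i)$, and the total length is exactly $d_1(\mu,\nu)$, as required.

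The main obstacle is therefore a purely combinatorial one: showing that such an admissible ordering of $D$ always exists when $\Ss$ is two-compatible. The natural approach is to use two-compatibility (no three pairwise incompatible splits) to find, at each step, a split $S\in D$ whose flip is ``safe'', i.e. after flipping $S$ the chosen blocks still pairwise intersect. I would argue by induction on $|D|$: consider the splits in $D$, restrict attention to those $S$ for which $\mu_S\neq\nu_S$ causes the potential conflict, and show using the absence of three pairwise incompatible splits (together with the fact that $\mu$ and $\nu$ are both themselves valid Buneman vertices, so all of $\mu$'s blocks pairwise meet and all of $\nu$'s blocks pairwise meet) that some split in $D$ can be moved first without violating the Buneman condition against any remaining block. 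The two-compatibility is exactly what rules out a ``cyclic'' obstruction among three splits of $D$ that would block every possible first move; this is the analogue, at the level of the complex, of the fact that $\comb(\Ss,\ws)$ is two-dimensional with quadrangular $2$-cells (Theorem~\ref{thm:two-decomposable}~\eqref{thm:two-decomposable:dim}), which guarantees that the relevant ``sub-hypercube'' spanned by $D$ is connected through its one-skeleton between $\mu$ and $\nu$ by a geodesic.

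Once the admissible ordering is produced, the proof concludes quickly: translating back via $\Lambda'$ turns the geodesic path of Buneman edges into a path in $(G_d,w_\infty)$ of $w_\infty$-length equal to $d_\infty(f,g)$, giving $d_{(G_d,w_\infty)}(f,g)\le d_\infty(f,g)$; the reverse inequality is immediate since $(G_d,w_\infty)$ is (homeomorphic to) a realisation and $d_\infty$ restricted to $V(\ct(d))$ is the $\ell_\infty$-distance, which no path in the one-skeleton can undercut. Hence $d_\infty(f,g)=d_{(G_d,w_\infty)}(f,g)$ for all $f,g\in V(\ct(d))$.
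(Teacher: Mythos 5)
Your overall route is sound and genuinely different from the paper's. The paper stays inside $(\ct(d),d_\infty)$: it takes a geodesic between $f$ and $g$, and, using the fact that all two-dimensional cells are quadrangles (Theorem~\ref{thm:two-decomposable}~\eqref{thm:two-decomposable:dim}), inductively re-routes the geodesic around the first such cell it enters along that cell's boundary edges, reducing the number of offending cells. You instead transfer the whole problem to $(\cb(\Ss,\ws),d_1)$ via the isometry of Theorem~\ref{thm:two-decomposable}~\eqref{thm:two-decomposable:isom} and build the path combinatorially by flipping one split at a time. The transfer, the identity $d_1(\mu,\nu)=\sum_{S\in D}\ws(S)$, the observation that two vertices differing on a single split span a one-cell of $\comb(\Ss,\ws)$, and the reverse inequality are all correct.

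The gap is that the one claim carrying all the weight --- that the splits in $D$ can be ordered so that every intermediate flip yields a vertex of $\cb(\Ss,\ws)$ --- is asserted rather than proved, and the mechanism you propose for proving it (two-compatibility ruling out a ``cyclic obstruction'' among three splits of $D$) is not where the difficulty lies. Write $A_S$ for the block of $S$ chosen by $\mu$. Flipping $S$ first fails exactly when some split $S'\neq S$ has $A_{S'}\cap(X\setminus A_S)=\emptyset$, i.e.\ $A_{S'}\subseteq A_S$. If $S'\notin D$ this cannot happen, because $\nu$ chooses both $X\setminus A_S$ and $A_{S'}$ and is itself a vertex; so the only possible obstructions are splits $S'\in D$ with $A_{S'}$ properly contained in $A_S$. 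Hence any $S\in D$ whose block $A_S$ is inclusion-minimal among $\SetOf{A_{S'}}{S'\in D}$ can be flipped first, and induction on $|D|$ finishes the argument. Note that this uses no two-compatibility whatsoever: the one-skeleton of the Buneman complex realises $d_1$ on its vertex set for \emph{any} weighted split system. Two-compatibility enters your proof solely through the isometry $d_1=d_\infty\circ\Lambda$, which is exactly the step that fails in Example~\ref{ex:three-dim} (there your combinatorial claim still holds, but $d_1$ and $d_\infty$ disagree). So the proof is completable along your lines, but as written the decisive lemma is missing, and the reason you give for expecting it to hold points in the wrong direction.
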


\begin{proof}
For any geodesic $\gamma: [0,1]\to \ct(d)$, let $\Omega(\gamma)$ be the set of all cells $C$ in $\comt(d)$ with dimension two or more such that their intersection with $\gamma$ is not contained in the union of the vertices and edges of $\comt(d)$. By Theorem~\ref{thm:two-decomposable}~\eqref{thm:two-decomposable:dim}, all elements of $\Omega(\gamma)$ are quadrangles. It now suffices to show that for any distinct $f,g\in V(\ct(d))$
there exists a geodesic $\gamma$ between $f$ and $g$ with $\Omega(\gamma)=\emptyset$.

Suppose that is not the case and let $\gamma$ be a geodesic between $f$ and $g$ such that 
the set $\Omega(\gamma)$ has minimal cardinality among all those geodesics. For any $C\in \Omega(\gamma)$, denote the minimal (resp. maximal) element $t\in [0,1]$ with $\gamma(t)\in C$ by $t^-_C$ (resp.~$t^+_C$). Since $\gamma$ is a geodesic, $\gamma(t)\in C$ holds if and only if 
$t\in [t^-_C,t^+_C]$. 

Let $C\in\Omega(\gamma)$ be the first two-dimensional cell met by $\gamma$, that is, the one with minimal $t^-_C$. Clearly, $\gamma(t^-_C)\in V(\ct(d))$ and, since $C$ is a quadrangle and $\gamma(t^+_C)$ belongs to one incident side of $C$, there exists a geodesic segment 
$\gamma'$ between $\gamma(t^-_C)$ and $\gamma(t^+_C)$ using only the boundary edges of $C$. This allows us to construct a geodesic $\gamma^*$ between $f$ and $g$ with
$|\Omega(\gamma^*)|=|\Omega(\gamma)|-1$, a 
contradiction.
\end{proof}


\section{Embeddings in the Buneman complex}
\label{sec:split:flow}

We now start to investigate embeddings of 
optimal realisations into 
the Buneman complex given by Theorem~\ref{thm:inj:geometric:graph}
and show that they have 
the following useful property (where $\Phi$ is the map 
defined 
in Section~\ref{subsect:BC}):

\begin{thm}\label{verth}
Let $(\Ss,\ws)$ be a weighted split system on $X$ and 
$(G,w)$ a minimal path"=saturated optimal realisation 
of $(X,d_{(\ws,\Ss)})$. Then for any non-expansive
map $\psi:||(G,w)|| \to \cb(\Ss,\ws)$ with $\psi(x)=\Phi(x)$ 
for all $x\in X$ we have
$\psi(V(G))\subseteq V(\cb(\Ss,\ws))$.
\end{thm}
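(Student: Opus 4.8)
The plan is to argue by contradiction: suppose some vertex $v \in V(G)$ has $\psi(v) \notin V(\cb(\Ss,\ws))$, and use this to construct either a strictly shorter realisation (contradicting optimality), a path-saturated realisation with more shortest paths, or a path-saturated realisation with fewer vertices (contradicting minimality among path-saturated realisations). The key structural fact to exploit is that $\cb(\Ss,\ws)$, being (for the relevant split systems) at most a reasonably low-dimensional cell complex, has the property that a point $\mu$ lies in the relative interior of a positive-dimensional cell precisely when several coordinates $\mu(A)$ are strictly between $0$ and $\ws(\{A,X\setminus A\})$; equivalently, $\mu \in V(\cb(\Ss,\ws))$ iff $\mu(A) \in \{0,\ws(\{A,X\setminus A\})\}$ for all $A$. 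So the hypothesis $\psi(v)\notin V(\cb)$ means $\psi(v)$ lies in the interior of an edge or a higher cell, and there is some split $S=\{A,B\}$ with $0 < \psi(v)(A) < \ws(S)$.

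**Key steps.**

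First I would record, from Theorem~\ref{thm:inj:geometric:graph}, that since $(G,w)$ is (in particular) path-saturated, $\psi$ is injective on $\|(G,w)\|$; combined with $\Lambda\circ\Phi = \kappa$ this lets me transfer the combinatorics of $\cb(\Ss,\ws)$ back to $G$. Next, for each split $S = \{A,B\}\in\Ss$, I would consider the ``cut'' it induces on $\|(G,w)\|$: the preimage under $\psi$ of the set where the $A$-coordinate equals $0$, respectively equals $\ws(S)$. Because $\psi$ is non-expansive and $\psi(x) = \Phi(x)$ pins the $X$-vertices to the two opposite facets of the defining hypercube $\ch(\Ss,\ws)$ according to which side of $S$ they lie on, the $S$-coordinate of $\psi$ behaves like a $1$-Lipschitz function along $\|(G,w)\|$ that is $0$ on $A\cap X$ and $\ws(S)$ on $B\cap X$. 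This is essentially the statement that $\psi$ realises each split $S$ as a ``cut'' of the geometric graph. A vertex $v$ with $0 < \psi(v)(A) < \ws(S)$ is one that sits strictly inside this cut for $S$.

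Then I would push on the edge $e=\{v,v'\}$ incident to $v$ along which the $S$-coordinate is changing. Using Lemma~\ref{lem:suff:opt:realisation}, every edge of $G$ lies on a shortest $X$--$X$ path, and every two adjacent edges lie on a common shortest path; I would use this to show that if $\psi(v)$ is in the interior of a cell, one can reroute or contract near $v$: either the two edges at $v$ that $\psi$ ``straightens out'' inside a flat quadrangular cell can be replaced by a single shorter path (killing optimality), or $v$ can be slid along an edge of $\cb$ to a vertex of $\cb$, producing another optimal realisation with at least as many shortest paths but fewer vertices, contradicting minimality of the path-saturated realisation. Making the ``reroute vs.\ contract'' dichotomy precise is where the low-dimensionality of $\cb(\Ss,\ws)$ (Theorem~\ref{thm:two-decomposable}, when it applies; more generally the cell structure) and the fact that $\psi$ is non-expansive are both needed.

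**Main obstacle.** The hard part will be the case where $\psi(v)$ lies in the interior of a $2$-dimensional (quadrangular) cell rather than merely an edge: here I cannot simply ``straighten'' a single path, and I must handle all edges incident to $v$ simultaneously, showing that the image star of $v$ under $\psi$ fits inside one flat quadrangle so that a local modification of $G$ (replacing the star of $v$ by paths along the boundary of that quadrangle, or by a single crossing vertex) yields a competing realisation that is no longer, has no fewer shortest paths, and has strictly fewer vertices. Controlling the shortest-path count through such a surgery — i.e.\ checking that no shortest $X$--$X$ path is destroyed — is the delicate bookkeeping, and I expect it to lean on Lemma~\ref{lem:suff:opt:realisation}\eqref{lem:suff:opt:realisation:two} together with the injectivity of $\psi$ to certify that distinct shortest paths in $G$ map to distinct geodesics in $\cb(\Ss,\ws)$.
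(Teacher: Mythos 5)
Your opening move is the same as the paper's: for a split $S=\{A,X\setminus A\}$, the coordinate function $\mu\mapsto\mu(A)/\ws(S)$ composed with $\psi$ is $\{0,1\}$-valued on $X$ and, because $\psi$ is non-expansive and hence carries $X$-geodesics to geodesics of $(\cb(\Ss,\ws),d_1)$, is monotone along every $X$-geodesic (this is exactly Lemma~\ref{ham:split-potential}, the ``split potential'' property, and your Lipschitz/cut description of it is correct). But from there your argument has a genuine gap, in two respects. First, everything after that point leans on structure that the hypotheses of Theorem~\ref{verth} do not provide: the theorem is stated for an \emph{arbitrary} weighted split system, so $\cb(\Ss,\ws)$ need not be two-dimensional, its cells need not be quadrangles, and $\Lambda$ need not be a bijection onto the tight-span, so you cannot transfer the injectivity of the Dress map from Theorem~\ref{thm:inj:geometric:graph} to your given $\psi$ (which is just \emph{any} non-expansive extension of $\Phi$, not assumed injective). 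Second, the core of your plan --- the ``reroute vs.\ contract'' surgery near a vertex whose image lies in the interior of a cell --- is not carried out, and as described it does not obviously work: repositioning $\psi(v)$ inside $\cb$ does not by itself change $G$ or $w$ (non-expansiveness only gives an inequality between edge weights of $G$ and distances in $\cb$), and ``sliding $v$ along an edge'' does not reduce the vertex count unless $v$ merges with another vertex, which you would have to arrange and verify against the shortest-path bookkeeping you correctly identify as delicate.

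The paper closes this gap without any geometry of $\cb$ beyond the monotonicity you already have. It proves a purely graph-theoretic statement (Lemma~\ref{potential}): any function on $||(G,w)||$ that is $\{0,1\}$-valued on $X$ and monotone along $X$-geodesics is $\{0,1\}$-valued on all of $V(G)$, provided $(G,w)$ is a minimal path-saturated optimal realisation. The mechanism is the split-flow digraph $\dg(G,w;A)$: orient each edge according to the shortest $X$--$X$ paths through it; monotonicity forces the potential to be constant on each strong connected component; and Proposition~\ref{prop:split-flow:digraph} shows there are at most two strong components (one containing $A$, one containing $X\setminus A$), so the only values attained at vertices are $0$ and $1$. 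All of the perturbation and contraction work you were anticipating lives inside the proof of that proposition --- perturbing $w$ by $\pm\epsilon$ across a putative third strong component to contradict optimality or path-saturation, and contracting the resulting zero-length edges to contradict minimality of the vertex count --- but it is organised around strong components of the digraph, not around cells of the Buneman complex, which is what makes it work for arbitrary split systems and arbitrary non-expansive $\psi$.
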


Note that this theorem holds for all split system. We shall prove this theorem at the end of this section, after introducing 
and investigating the notions of split-flow 
digraphs and split potentials.\\

We begin with split-flow digraphs. To this end, recalling the following  notation for digraphs. 
A \emph{directed graph} (or \emph{digraph} for short) $\dg=(V,\arc)$ is a pair consisting of a set $V$ of vertices and a  subset $\arc\subseteq V\times V$, whose elements are called the {\em arcs} of $\dg$. A sequence $v_0,v_1,\cdots,v_k$ of vertices of $\dg$ is called a 
{\em directed path} (from $v_0$ to $v_k$) in $\dg$ if $(v_{i-1},v_i)\in \arc$ holds for all 
$i=1,\dots,k$.
A {\em strongly connected component} in $\dg$ is a maximal subset 
$C$ of $V$ such that for all distinct $u,v\in C$ there exists a directed 
path from $u$ to $v$. Obviously, the set of strongly connected components forms a partition of $V$.

Given a realisation $(G,w)$ of a finite metric space $(X,d)$ and some $A\subseteq X$,  the \emph{split-flow digraph} $\dg(G,w;A)$ is the digraph with vertex set $V(G)$ and arc set
\begin{align*}
\arc&:=\big\{(u,v),(v,u)\in V\times V\,\big|\,\text{there exists }x,y\in A \text{ or } x,y\in X\setminus A\\
  &\quad\quad\text{ such that } \{u,v\} \text{ belongs to a shortest path from } x \text{ to }y \text{ in }(G,w)\big\}\\
    &\quad\bigcup\big\{(u,v)\in V\times V\,\big|\,\text{there exists }x\in A \text{ and } y\in X\setminus A\\
  &\quad\quad\text{ such that } \{u,v\} \text{ belongs to a shortest path from } x \text{ to }y \text{ in }(G,w)\big\}\,.
\end{align*}

If $(G,w)$ is an optimal realisation, Lemma~\ref{lem:suff:opt:realisation} implies that for each $\{u,v\}\in E(G)$ we have $(u,v)\in\arc$ or $(v,u)\in\arc$ and both hold if and only if $\{u,v\}$ belongs to a shortest path between two elements of $A$ or between two elements of $X\setminus A$. Furthermore, this implies that there exist strongly connected components $C_A$ and $C_{X\setminus A}$ such that $A\subseteq C_A$ and $X\setminus A\subseteq C_{X\setminus A}$. (Note that $C_A$ and $C_{X\setminus A}$ might be equal.) Note that for a split-flow digraph, there might strongly connected components other than $C_A$ and $C_{X\setminus A}$ (see Fig.~1 
 for an example). But we shall show that for the case of minimal path"=saturated optimal realisations,  these are the only strongly connected components of $\dg(G,w;A)$.


\begin{figure}[h]\label{fig:sfd}
\centering
\includegraphics[height=1.3in]{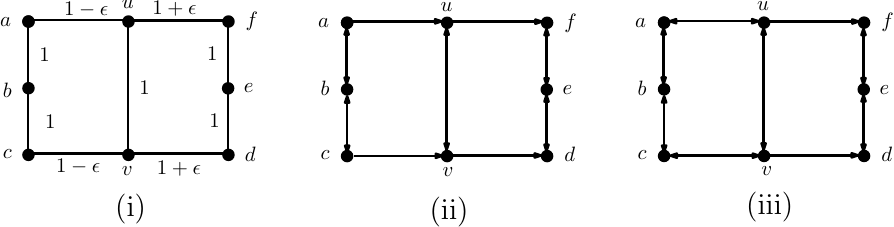}
\caption{
{\small
Examples of split-flow digraphs: (i) An optimal realisation $(G,w_{\epsilon})$ $(-1/2\le \epsilon \le 1/2)$ for the metric in~\cite{Althofer88} on $X=\{a,b,c,d,e,f\}$; (ii) the split-flow digraph $D(G,w_{0};A)$ with $A=\{a,b,c\}$;
 (iii) the split-flow digraph $D(G,w_{1/2};A)$ 
 with $A=\{a,b,c\}$}. Note that the number of shortest paths in $(G,w_{1/2})$ with ends in $X$ is larger than that in $(G,w_{0})$,  and  the split-flow digraph in (ii) has three strongly connected components while that in (iii) has two.
}
\end{figure}

\begin{thm}
\label{thm:split-flow:digraph}
Let $(G,w)$ be a minimal path"=saturated optimal realisation of a metric space $(X,d)$ and $A\subseteq X$. 
Then the strongly connected components of the split-flow digraph $\dg(G,w;A)$ are $C_A$ and $C_{X\setminus A}$. 
In particular, the number of strongly connected components of $\dg(G,w;A)$ is either one or two.
\end{thm}


\begin{proof}
Let $V$ and $\arc$ be the vertex and arc set of $\dg(G,w;A)$, respectively. Assume that there exists a strongly connected component $W$
of $\dg(G,w;A)$ that is 
distinct from both $C_A$ and $C_{X\setminus A}$. It suffices to show that this leads to a contradiction.
 
 \bigskip
The first stage of the proof is to show that
\begin{align*}
\Delta(G,w)=\min\{ w(P)-d(x,y): &~P\not \in \Gamma(G,w;X)~ \mbox{is a path in $(G,w)$ whose } \\ & \mbox{ ends $x$ and $y$ are contained in $X$}\}
\end{align*}
is strictly positive. To see this, consider a vertex $u$ in $W$. Since the degree of $u$ is at least three, applying Lemma~\ref{lem:suff:opt:realisation} 
  to the edges incident with $u$ shows that  $u$ is contained in a path $P_u$ connecting two elements $x_u,y_u\in X$ so that we have either $\{x_u,y_u\} \subseteq A$ or $\{x_u,y_u\} \subseteq  X\setminus A$. As $u$ is contained in neither $C_{A}$ nor $C_{X\setminus A}$, we have $w(P_u)-d(x,y)>0$. Since there are finite many paths in $(G,w)$, it follows that $\Delta(G,w)>0$, as required.\\

 The second stage of the proof is to show that there exists a family of optimal realisations of $(X,d)$ which can be parametrised by an interval and derived from $(G,w)$ by perturbing the weights of a certain set of edges determined by the strongly connected component $W$.   
  To this end, define the map $\delta: E(G) \to \RR$ as
\[
\delta(\{u,v\}):=
\begin{cases}
+1, &\text{if $(u,v)\in \arc$ with $u\in V\setminus W,v\in W$,}\\
-1, &\text{if $(u,v) \in \arc$ with $u\in W, v \in V\setminus W$,}\\
0,  &\text{otherwise.}
\end{cases}
\]
So for any $e\in E(G)$ we have  $\delta(e)=+1$ (resp. $\delta(e)=-1$) if $e$ induces an arc in $\dg(G,w;A)$ that is {\em entering} (resp. {\em leaving}) $W$. By exchanging $A$ and $X\setminus A$ if necessary, we can assume that the difference $N$ of the number of arcs entering and leaving $W$ is non-negative.
Since $W$ is a strongly connected component, for all $u\in V\setminus W$ and $v\in W$ with $\{u,v\}\in E(G)$, we have either $(u,v)\in \arc$ or $(v,u) \in \arc$, but not both. Therefore the above map $\delta$ is well defined. 

Let $t=\min\smallSetOf{w(e)}{e\in E(G) \text{ and } \delta(e)=-1}$. Then we have $t>0$. 
 For any $\epsilon\in (0,t)$,  the map $w_\epsilon:E(G)\to \RR_{>0}$ defined as 
\[
w_\epsilon(e):=w(e)+\epsilon\delta(e)
\]
is a weight function on $E(G)$, and hence $(G,w_\epsilon)$ is a weighted graph. Since $X \cap W=\emptyset$, the numbers of edges entering and leaving $W$ are equal for every path in $\Gamma(\rg,w;X)$. Therefore we have 
\begin{align}
\label{eq:geo:paths}
w(P)=w_\epsilon(P)~~~\mbox{for all}~~P\in \Gamma(\rg,w;X)~~\mbox{and}~~0<\epsilon<t\,,
\end{align} 
and hence also
\begin{align}
\label{eq:path:shorten}
d_{(G,w_\epsilon)}(x,y)\leq d_{(G,w)}(x,y)=d(x,y)\text{ for all }x,y\in X
~~\mbox{and}~~0<\epsilon<t\,.
\end{align}

In addition, we have the following:

\bigskip
\noindent
{\bf Claim-A:} If $(G,w_\epsilon)$ is a realisation of $(X,d)$ for some $0<\epsilon<t$, then $(G,w_\epsilon)$ is an optimal realisation with $\Gamma(G,w;X)\subseteq \Gamma(G,w_\epsilon;X)$. \\

Indeed, we have $l(G,w_{\epsilon})=l(G,w)-\epsilon N$, which, since $(G,w)$ is optimal, implies $N=0$ and that $(G,w_{\epsilon})$ is optimal.  Moreover, by Eq.~(\ref{eq:geo:paths}) we have $\Gamma(G,w;X)\subseteq \Gamma(G,w_\epsilon;X)$, and hence the claim follows.

The last step of the second stage is to show that for all
$$0<\epsilon<\min\Big\{t,\frac{\Delta(G,w)}{|E(G)|}\Big\},$$
$(G,w_\epsilon)$ is an optimal realisation of $(X,d)$ (the existence of such an $\epsilon$ follows from $\Delta(G,w)>0$, as established in the first stage).  To this end, consider two arbitrary elements $x,y$ in $X$ and a path $P$ between them in $G$. If $P$ is contained in $\Gamma(\rg,w;X)$, then we have $w_\epsilon(P)=w(P)\geq d(x,y)$ in view of Eq.~(\ref{eq:geo:paths}). Otherwise, $P$ is not contained in $\Gamma(\rg,w;X)$, and by the definition of $\Delta(G,w)$ we have
\begin{align*}
w_\epsilon(P) &\geq w(P)-|E(P)|\epsilon  \\
&> w(P)-\frac{|E(P)|}{|E(G)|} \Delta(G,w) \\
&\ge w(P)-\Delta(G,w) \\
&\ge d(x,y).
\end{align*}
This implies that for all $x,y\in X$, we have $d_{(G,w_\epsilon)}(x,y)\geq d(x,y)$ and hence $d_{(G,w_\epsilon)}(x,y)= d(x,y)$ in view of Eq.~(\ref{eq:path:shorten}).  Together with {\bf Claim-A}, it follows that $(G,w_\epsilon)$ is an optimal realisation of $(X,d)$, completing the proof of the second stage.


\bigskip

In the final stage of the proof, we shall obtain a contradiction by considering optimal realisations corresponding to the right extremal point in the parameter interval that we found in the second stage of the proof.  To this end, let 
\[
\epsilon'=\sup \SetOf{\epsilon\in (0,t)}{(G,w_\epsilon)~~\text{is a realisation of }(X,d)}\,.
\]
Then we have $\epsilon'>0$ by the second stage.  Now we have the following two cases to consider.\\

\noindent \textbf{Case 1:} $0<\epsilon'<t$. We shall first show that $(G,w_{\epsilon'})$ is an optimal realisation of $(X,d)$. 
By {\bf Claim-A} it suffices to prove that $(G,w_{\epsilon'})$ is a realisation of $(X,d)$.  Indeed, if $(G,w_{\epsilon'})$ is not a realisation, then by Eq.~(\ref{eq:path:shorten}) there exist two elements $x_0,y_0$ in $X$ and a path $P_0$ between $x_0$ and $y_0$ in $G$ such that 
\begin{align*}
w_{\epsilon'}(P_0)=d_{(G,w_{\epsilon'})}(x_0,y_0)<d(x_0,y_0).
\end{align*}
On the other hand, for all $0<\epsilon<\epsilon'$ we have 
$w_{\epsilon}(P_0)\ge d(x_0,y_0)$ because $(G,w_{\epsilon})$ is a realisation of $(X,d)$. Since $w_{\epsilon}(P_0)$ is a continuous function for $0<\epsilon\le \epsilon'$, we have $w_{\epsilon'}(P_0)\ge d(x_0,y_0)$. This is a contradiction, and hence $(G,w_{\epsilon'})$ must be an optimal realisation of $(X,d)$.

Next, fix some $\epsilon_1\in (\epsilon',t)$. Since the weighted graph $(G,w_{\epsilon_1})$ is not a realisation of $(X,d)$, there exists a path $P_1$ with ends $x_1,y_1\in X$ so that $w_{\epsilon_1}(P_1)<d(x_1,y_1)$.
Note that this implies $P_1\not\in \Gamma(\rg,w;X)$ by Eq.~(\ref{eq:geo:paths}).
On the other hand, since $w_{\epsilon}(P_1)\geq d(x_1,y_1)$ for all $0<\epsilon<\epsilon'$ and $w_{\epsilon}(P_1)$ is a continuous function for  $0<\epsilon\le \epsilon'$, we have $w_{\epsilon'}(P_1)=d(x_1,y_1)$, and hence $P_1\in\Gamma(\rg,w_{\epsilon'};X)$. Noting that  $\Gamma(\rg,w;X)\subseteq \Gamma(\rg,w_{\epsilon'};X)$ in view of {\bf Claim-A}, we 
have $|\Gamma(\rg,w_{\epsilon'};X)|> |\Gamma(\rg,w;X)|$,  which is a contradiction to the fact that $(G,w)$ is path"=saturated. This completes the proof of Case 1.\\

\noindent \textbf{Case 2:} $\epsilon'=t$. Let
\[
M:=\SetOf{\{u,v\}\in E(G)}{w(\{u,v\})=t\text{ and }
\delta(\{u,v\})=-1}.
\] 
By definition of $t$, $M$ is not empty.  Next, we will show that $M$ is a matching, that is, no two edges in $M$ share a common vertex. Suppose $M$ contains $e_1=\{u,v\}$ and $e_2=\{v,u'\}$ for some $u,v,u'\in V(G)$.  We may assume $v\in W$ and $u,u'\in V\setminus W$, as the other case (i.e., $u,u'\in W$ and $v\in V\setminus W$) is similar. By Lemma~\ref{lem:suff:opt:realisation}~\eqref{lem:suff:opt:realisation:two}, this implies that $e_1$ and $e_2$ are contained in a shortest path between two elements of~$X$. By the construction of $\dg(G,w;A)$, this is a contradiction  to the assumption that $W$ is a strongly connected component.

Now let $G'$ be the graph obtained from $G$ by \emph{contracting} all edges in $M$. That is, for every edge $e=\{u,v\}$ in $M$, where $u\in W$, we add an edge $\{u',v\}$ for all $u'\not=v$ that is adjacent to $u$, and delete $u$ and all edges incident to it. Since 
$M$ is a matching, the graph $G'$ is  well-defined. We define the weight function $w':E(G')\to \RR_{>0}$ by setting $w'(e)=w(e)+t\delta(e)$ if $e\in E(G)$ and $w'(e)=w(e^*)+t\delta(e^*)$ otherwise, where $e^*=\{u',u\}$ for the unique vertex $u$ in $G$ so that $\{u,v\} \in M$.

Since contracting length-zero edges does not change any path lengths, an argument similar to the one in Case 1 shows that $(G',w')$ is an optimal realisation of $(X,d)$.


Now consider two arbitrary paths $P_1,P_2\in\Gamma(G,w;X)$ and let $P'_1,P'_2\in\Gamma(G',w';X)$ be the paths obtained by contracting all its edges contained in $M$. Then it remains to show that $P'_1\not =P'_2$, because this implies that $|\Gamma(G',w';X)|\geq|\Gamma(G,w;X)|$,  and hence $(G',w')$ is a path"=saturated optimal realisation of $(X,d)$ with $|V(G')|<|V(G)|$, a contradiction as required. 
Indeed, if $P'_1=P'_2$, then there exists some edge $e=\{u,v\}\in M$ such that $e$ is contained in exactly one path among $P_1,P_2$, say~$P_1$. Switching the role of $u$ and $v$ if necessarily, we have $(u,v)\in \arc$. Let $s$ be the other vertex in $P_1$ that is adjacent to $u$. Since no edges in $M$ share a common vertex, we have $\{s,u\}\not \in M$ and hence $\{s,v\}\in P'_1=P'_2$. Again using the fact that no two edges in $M$ share a common vertex, we get $\{s,v\}\in P_2$ and so $\{u,v\},\{s,v\},\{s,u\}\in E(G)$, contradicting Corollary~\ref{cor:triangle:free}. Therefore we have $P'_1\not = P'_2$, which completes the proof of Case 2, as well as the theorem.
\end{proof}


Let $(G,w)$ be a realisation of $(X,d)$. A
map $\lambda: ||(G,w)|| \to [0,1]$ is called 
a \emph{split potential} on $||(G,w)||$ if
\begin{enumerate}
\item $\lambda(x) \in \{0,1\}$ for all $x \in X$, and 
\item $\lambda\circ\gamma:[0,1]\to [0,1]$ is monotonic for all $X$-geodesics $\gamma:[0,1] \to ||(\rg,w)||$.
\end{enumerate}

\begin{lem}\label{potential}
Let $(G,w)$ be a minimal path"=saturated optimal realisation of  $(X,d)$ and $\lambda$ a split potential on $||(G,w)||$. Then $\lambda(v) \in \{0,1\}$ holds for all $v \in  V(G)$.
\end{lem}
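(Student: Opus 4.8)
The plan is to argue by contradiction, assuming some vertex $v_0 \in V(G)$ has $\lambda(v_0) \in (0,1)$, and to use this to build a strictly shorter or strictly more path-saturated realisation, contradicting minimal path-saturation of $(G,w)$. The key structural input will be that the split potential $\lambda$, being monotonic along every $X$-geodesic, behaves very much like the indicator potential attached to a split; in fact I expect that the set $A := \{x \in X : \lambda(x) = 1\}$ determines a split $\{A, X\setminus A\}$ and that the relevant combinatorial object controlling $\lambda$ on the interior vertices is precisely the split-flow digraph $\dg(G,w;A)$ studied in Proposition~\ref{prop:split-flow:digraph}.

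First I would record the basic consequences of the two defining properties of $\lambda$. Since $\lambda$ is monotonic along each $X$-geodesic and $(G,w)$ is an optimal realisation, every edge $e = \{u,v\} \in E(G)$ lies on some $X$-geodesic by Lemma~\ref{lem:suff:opt:realisation}~(1), and similarly every pair of adjacent edges lies on a common $X$-geodesic by Lemma~\ref{lem:suff:opt:realisation}~(2); hence along each edge the function $\lambda$ restricted to that edge (viewed as an interval in $||(G,w)||$) is monotonic, and the monotonicity directions of adjacent edges are compatible. This lets me orient each edge in the direction of (weakly) increasing $\lambda$, and I expect this orientation to coincide, on edges where $\lambda$ is genuinely changing, with an arc of $\dg(G,w;A)$ where $A = \lambda^{-1}(1) \cap X$; the edges on which $\lambda$ is constant are exactly those not separating $A$ from $X\setminus A$ along any geodesic. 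The upshot is that the set $W := \{v \in V(G) : 0 < \lambda(v) < 1\}$ is "trapped": every $X$-geodesic entering $W$ must leave it again (since its endpoints lie in $X$ where $\lambda \in \{0,1\}$), so $W$ meets no element of $X$ and $W$ is contained in a strong connected component of $\dg(G,w;A)$ distinct from $C_A$ and $C_{X\setminus A}$ — unless $\lambda$ restricted to $W$ is locally constant, which the strict inequality $0<\lambda(v_0)<1$ together with monotonicity along edges through $v_0$ will rule out on at least one vertex.

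The main step, and the one I expect to be the main obstacle, is to convert the existence of such a trapped set $W$ into the contradiction. Here I would invoke Proposition~\ref{prop:split-flow:digraph} directly: it says that $\dg(G,w;A)$ has only the strong connected components $C_A$ and $C_{X\setminus A}$ (at most two of them), so there is simply no room for a component containing a vertex with $\lambda$-value strictly between $0$ and $1$. Making this precise requires showing carefully that a vertex $v_0$ with $\lambda(v_0) \in (0,1)$ forces a genuine strong connected component of $\dg(G,w;A)$ disjoint from $X$: starting at $v_0$ and following edges in the $\lambda$-increasing direction must eventually reach $\lambda = 1$, and following the $\lambda$-decreasing direction must reach $\lambda = 0$, but to get a whole strong component one needs that the portion of the graph where $\lambda \in (0,1)$ is itself strongly connected in $\dg(G,w;A)$ — and this is where the fact that $\dg$ records arcs in \emph{both} directions along any edge lying on an $A$-to-$A$ (or complement-to-complement) geodesic, together with property (2) of split potentials forcing $\lambda$ to be \emph{strictly} monotonic on any such edge (hence constant on it, forcing $\lambda \in \{0,1\}$ on its endpoints), becomes the crux. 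The careful bookkeeping is: on edges lying on a geodesic between two points of $A$ or two points of $X\setminus A$, monotonicity of $\lambda$ along a geodesic that starts and ends at the same $\lambda$-value (namely $0$ or $1$) forces $\lambda$ to be constant on the whole geodesic, so such edges carry constant $\lambda$; consequently the only edges with non-constant $\lambda$ are the "separating" ones, these get a single arc in $\dg$, and a vertex in $W$ then lies on no $A$-$A$ or complement-complement geodesic and on no $X$-geodesic at all, placing it in a third strong component — contradicting Proposition~\ref{prop:split-flow:digraph}.

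Finally, I would also handle the degenerate possibility that $W$ is empty but $\lambda$ still takes a value in $(0,1)$ at an isolated vertex: this cannot happen, because if $\lambda(v_0) \in (0,1)$ then the edges incident to $v_0$ on which $\lambda$ is non-constant (there is at least one, by Lemma~\ref{lem:suff:opt:realisation}~(1) applied to an incident edge, combined with strict monotonicity) either increase or decrease $\lambda$, and following them keeps us among vertices with $\lambda \in (0,1)$ until we exit at $0$ or $1$ — so $v_0$ genuinely lies in a nontrivial piece of the graph separated from $X$, which is exactly the situation excluded above. Assembling these pieces gives $\lambda(v) \in \{0,1\}$ for every $v \in V(G)$, as claimed.
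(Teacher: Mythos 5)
Your overall strategy is the paper's: attach to $\lambda$ the partition $A=\lambda^{-1}(c)\cap X$ of $X$, pass to the split-flow digraph $\dg(G,w;A)$, and let Proposition~\ref{prop:split-flow:digraph} do the heavy lifting. You also correctly isolate the crucial observation that an edge lying on a shortest path between two points of $A$ (or two points of $X\setminus A$) must carry constant $\lambda$ equal to the common endpoint value, since $\lambda$ is monotone along that geodesic and agrees at its two ends. So the skeleton is sound and matches the paper.

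However, the step by which you place a vertex $v_0$ with $\lambda(v_0)\in(0,1)$ into a ``third'' strong component is broken as written. You assert that such a vertex ``lies on no $X$-geodesic at all''; this is false: by Lemma~\ref{lem:suff:opt:realisation} every edge of an optimal realisation lies on some shortest path between elements of $X$, so $v_0$ certainly lies on $X$-geodesics --- necessarily ones joining a point of $A$ to a point of $X\setminus A$, which contribute single (one-directional) arcs at $v_0$. And lying on no $A$--$A$ or $(X\setminus A)$--$(X\setminus A)$ geodesic does not by itself expel $v_0$ from $C_A$ or $C_{X\setminus A}$: a priori $v_0$ could be joined to $C_A$ by a directed cycle built entirely from such single arcs. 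The clean way to close this (and it is what the paper does) is to prove the one statement that $\lambda$ is weakly monotone along every arc of $\dg(G,w;A)$: for bidirectional arcs this gives equality by your constancy observation, and for single arcs it follows from monotonicity of $\lambda$ along the underlying $A$-to-$(X\setminus A)$ geodesic. Then $\lambda$ is automatically constant on every strong connected component, and Proposition~\ref{prop:split-flow:digraph} says there are at most two of these, $C_A\supseteq A$ and $C_{X\setminus A}\supseteq X\setminus A$, each meeting $X$ where $\lambda\in\{0,1\}$ --- no contradiction argument, no analysis of the set $W$, and no claim about where $W$ sits is needed. Your detour through ``$W$ is itself strongly connected'' and the ``degenerate possibility that $W$ is empty but $\lambda$ still takes a value in $(0,1)$ at an isolated vertex'' (vacuous by your own definition of $W$) are symptoms of this missing monotonicity-along-arcs lemma.
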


\begin{proof}
Let $A$ be the set of all $x\in X$ that are mapped to $0$ by $\lambda$ and let $\dg=(V,\arc)$ be the split-flow digraph $\dg(\rg,w;A)$. 

We will now show that $(u,v)\in \arc$ implies that $\lambda(u)\leq \lambda(v)$. This implies that $\lambda$ restricted to any strongly connected component of $\dg$ is a constant, and the lemma then follows from Theorem~\ref{thm:split-flow:digraph}. Indeed, fix $a\in A$ and $b\in X-A$; then we have $\{f(a),f(b)\}\subseteq \{0,1\}$ and Theorem~\ref{thm:split-flow:digraph} implies that we have $f(v)\in \{f(a),f(b)\}$  for each $v\in V(G)$.

So let $(u,v)\in \arc$. If $u\in A$ or $v\in X\setminus A$, then $\lambda(u)\leq \lambda(v)$ obviously holds. Otherwise, by Lemma~\ref{lem:suff:opt:realisation}(i), there exists $x,y \in X$ and a shortest path $P$ in $(G,w)$ from $x$ to $y$ such that $\{u,v\}$ is an edge in $P$ and either $x\in A$ or $y\in X\setminus A$. Here we shall consider the case $x\in A$ since the other one is similar. The path $P$ induces an $X$-geodesic $\gamma$ between $x$ and $y$ that passes first through $u$ and then through $v$. Since $\lambda\circ \gamma$ is monotonic and $\lambda(\gamma(0))=\lambda(x)=0<1=\lambda(y)=\lambda(\gamma(1))$, we get $\lambda(u)\leq\lambda(v)$, as required.
\end{proof}

We now present a specific way to define 
split potentials using the Buneman complex,
which will allow us to prove Theorem~\ref{verth}. 

Let $(\Ss,\ws)$ be a weighted split system on $X$.
For any $A\in \supp(\ws)$, define the 
map $\lambda_A: \cb(\Ss,\ws) \rightarrow [0,1]$
by putting
\[
\lambda_A(\mu) := \frac{\mu(A)}{\ws(\{A,X\setminus A\})}
\quad\text{for all }\mu \in \cb(\Ss,\ws)\,.
\]

\begin{lem}\label{ham:split-potential}
Let $(\Ss,\ws)$ be a weighted split system on $X$, $A\in \supp(\ws)$ and $(G,w)$ a realisation of $(X,d_{(\ws,\Ss)})$. Then for any non-expansive map $\psi:||(G,w)|| \to \cb(\Ss,\ws)$ with $\psi(x)=\Phi(x)$ for all $x\in X$, the function $\lambda_A\circ \psi$ is a split potential on $||(G,w)||$.
\end{lem}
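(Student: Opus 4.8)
The plan is to verify the two defining conditions of a split potential directly for the composite map $\lambda_A\circ\psi$. First I would check condition (1): for every $x\in X$, $\psi(x)=\Phi(x)$ by hypothesis, so $(\lambda_A\circ\psi)(x)=\Phi(x)(A)/\ws(\{A,X\setminus A\})$, which by the definition of $\Phi$ in Equation~\eqref{eq:Phi} equals $1$ if $x\in A$ and $0$ otherwise; in particular it lies in $\{0,1\}$. This is immediate.

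The substantive part is condition (2): that $\lambda_A\circ\psi\circ\gamma$ is monotonic for every $X$-geodesic $\gamma:[0,1]\to||(G,w)||$. The key point is that $\psi$ is non-expansive and $\gamma$ is a geodesic whose endpoints $x=\gamma(0)$, $y=\gamma(1)$ lie in $X$, so $\psi\circ\gamma$ runs from $\Phi(x)$ to $\Phi(y)$ inside $\cb(\Ss,\ws)$ with $d_1$-length at most $d(x,y)=d_1(\Phi(x),\Phi(y))$; hence $\psi\circ\gamma$ is itself a (reparametrised) geodesic in $(\cb(\Ss,\ws),d_1)$ from $\Phi(x)$ to $\Phi(y)$. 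I would then argue coordinatewise: for $\mu,\nu\in\cb(\Ss,\ws)$ one has $d_1(\mu,\nu)\ge\frac12\big(|\mu(A)-\nu(A)|+|\mu(B)-\nu(B)|\big)$ where $B=X\setminus A$, and on $\cb(\Ss,\ws)\subseteq\ch(\Ss,\ws)$ the constraint $\mu(A)+\mu(B)=\ws(\{A,B\})$ forces $|\mu(A)-\nu(A)|=|\mu(B)-\nu(B)|$, so this lower bound is exactly $|\mu(A)-\nu(A)|$. Applying this along the geodesic $\psi\circ\gamma$: for any $0\le a\le b\le c\le 1$, additivity of the $d_1$-lengths of the segments forces $|\psi(\gamma(a))(A)-\psi(\gamma(c))(A)|=|\psi(\gamma(a))(A)-\psi(\gamma(b))(A)|+|\psi(\gamma(b))(A)-\psi(\gamma(c))(A)|$, which is precisely the statement that $t\mapsto\psi(\gamma(t))(A)$ is monotonic on $[0,1]$. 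Dividing by the positive constant $\ws(\{A,X\setminus A\})$ gives monotonicity of $\lambda_A\circ\psi\circ\gamma$.

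More carefully, I would phrase the additivity step as follows. Since $\psi$ is non-expansive and $\gamma$ is a geodesic, for $a<b<c$ we have
\[
d_1\big(\psi(\gamma(a)),\psi(\gamma(c))\big)\le d_{||(G,w)||}\big(\gamma(a),\gamma(c)\big)
= d_{||(G,w)||}\big(\gamma(a),\gamma(b)\big)+d_{||(G,w)||}\big(\gamma(b),\gamma(c)\big),
\]
and the right-hand side is at least $d_1(\psi(\gamma(a)),\psi(\gamma(b)))+d_1(\psi(\gamma(b)),\psi(\gamma(c)))$, which by the triangle inequality is at least $d_1(\psi(\gamma(a)),\psi(\gamma(c)))$. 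Hence equality holds throughout, and in particular
\[
d_1\big(\psi(\gamma(a)),\psi(\gamma(c))\big)= d_1\big(\psi(\gamma(a)),\psi(\gamma(b))\big)+d_1\big(\psi(\gamma(b)),\psi(\gamma(c))\big).
\]
Now using that for points of $\cb(\Ss,\ws)$ we have $d_1(\mu,\nu)\ge|\mu(A)-\nu(A)|$ together with the reverse triangle inequality $|\mu(A)-\nu(A)|\le|\mu(A)-\mu'(A)|$ type estimates for the three segments, the only way the displayed additive identity can hold is if $|\psi(\gamma(a))(A)-\psi(\gamma(c))(A)|$ also splits additively at $b$, forcing the scalar function $t\mapsto\psi(\gamma(t))(A)$ to be monotonic.

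The main obstacle I anticipate is the last implication: deducing monotonicity of the coordinate $t\mapsto\psi(\gamma(t))(A)$ from the additivity of the $d_1$-distances. The clean way to do this is to note that the single coordinate map $\mu\mapsto\mu(A)$ is $1$-Lipschitz from $(\cb(\Ss,\ws),d_1)$ to $\R$ (indeed $|\mu(A)-\nu(A)|\le d_1(\mu,\nu)$), so $h(t):=\psi(\gamma(t))(A)$ is $1$-Lipschitz with respect to the arc-length parameter of the geodesic $\psi\circ\gamma$; and a $1$-Lipschitz real function pulled back along a geodesic of a metric space, when combined with the fact that $d_1$-distance equals the $\ell_1$-sum of the coordinate differences on the hypercube $\ch(\Ss,\ws)$ (each term of which is separately realised with no cancellation along a geodesic), must be monotonic. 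I would make this precise by picking, for the geodesic $\psi\circ\gamma$ between the two cube-vertices $\Phi(x)$ and $\Phi(y)$, the standard fact that $d_1$ restricted to $\ch(\Ss,\ws)$ is the $\ell_1$-metric, so geodesics between two points are exactly the monotone paths in each coordinate; this is where I would invoke, if needed, that $\psi\circ\gamma$ stays in $\cb(\Ss,\ws)$ and that $\Phi(x),\Phi(y)$ are vertices, then read off monotonicity of the $A$-coordinate. Once monotonicity of $h$ is established, dividing by the positive constant $\ws(\{A,X\setminus A\})$ completes the verification of condition (2), and the lemma follows.
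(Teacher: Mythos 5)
Your proposal is correct and takes essentially the same route as the paper: condition (1) is immediate from $\psi|_X=\Phi$, and for condition (2) you show $\psi\circ\gamma$ is a $d_1$-geodesic and use the $\ell_1$-decomposition of $d_1$ over $\supp(\ws)$ (each coordinate term obeying the triangle inequality) to force additivity, hence monotonicity, of the $A$-coordinate along the geodesic. The extra observations (that the lower bound $d_1(\mu,\nu)\ge|\mu(A)-\nu(A)|$ is tight because of the constraint $\mu(A)+\mu(X\setminus A)=\ws(\{A,X\setminus A\})$, and that $\Phi(x),\Phi(y)$ are vertices) are harmless but not needed.
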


\begin{proof}
Let $\gamma: [0,1]\rightarrow ||(\rg,w)||$ be an $X$-geodesic in $||(G,w)||$ and set $\lambda'_A:=\lambda_A \circ \psi \circ \gamma$. Since  $\psi(x)=\Phi(x)$, we have $(\lambda_A\circ \psi)(x) \in \{0,1\}$ for all $x\in X$, so it remains to show that $\lambda'_A$ is monotonic.

Since $\psi$ is non-expansive,
the map $\psi\circ \gamma:[0,1]\to \cb(\Ss,\ws)$
is a geodesic in $(\cb(\Ss,\ws),d_1)$. Therefore, for any
 $0\leq t_1<t_2<t_3\leq 1$, we have
\[
d_1(\nu_1,\nu_3)=
d_1(\nu_1,\nu_2)+d_1(\nu_2,\nu_3)\,,
\]
where $\nu_i:=\psi\circ \gamma(t_i)$ for $i=1,2,3$. By the definition of the metric $d_1$, this gives us
\[
\sum_{B \in \supp(\ws)} |\nu_1 (B)-\nu_3 (B)| =
\sum_{B \in \supp(\ws)}\big(|\nu_1 (B)-\nu_2 (B)|+ |\nu_2 (B)-\nu_3 (B)| \big).
\]
Since, by definition, for all $B\in \supp(\ws)$, we have 
\[
|\nu_1(B)-\nu_3(B)|\leq|\nu_1(B)-\nu_2(B)|+|\nu_2(B)-\nu_3(B)|\,,
\]
this implies that 
\[
|\nu_1 (A)-\nu_3 (A)|= |\nu_1 (A)-\nu_2 (A)|+|\nu_2 (A)-\nu_3 (A)|\,,
\]
and hence 
\[
\big(\nu_1 (A)-\nu_2 (A)\big)\cdot \big(\nu_2 (A)-\nu_3 (A)\big) \geq 0.
\]
\noindent
Together with  
$$
\lambda'_A(t_i)=\lambda_A(\nu_i)=\frac{\nu_i(A)}{\alpha(\{A,X\setminus A\})}
$$
 for $i=1,2,3$, this implies
\[
\big(\lambda'_A(t_1)-\lambda'_A(t_2)\big)\cdot \big(\lambda'_A(t_2)-\lambda'_A(t_3) \big) \geq 0\,,
\]
hence $\lambda'_A$ is monotonic.
\end{proof}

With the above results, we are now in a position to present the proof of
the theorem stated at the beginning of this section. 

\begin{proof}[Proof of Theorem~\ref{verth}]
Lemmas \ref{potential} and \ref{ham:split-potential} show that for all $A\in\supp(\ws)$ and $v\in V(G)$ we have $\lambda_A(\psi(v))\in\{0,1\}$. However, by the definition of the map $\lambda_A$, we have $\lambda_A(\mu)\in\{0,1\}$ if and only if $\mu(A)\in\{0,\ws(\{A,X\setminus A\})\}$ for all $\mu\in \cb(\Ss,\ws)$. So $\psi(v)$ is a vertex of $\cb(\Ss,\ws)$.
\end{proof}


\section{The main theorem} 
\label{sec:proof}

We are now ready to prove the main result of this paper,
from which Theorem~\ref{2compat} follows immediately.

\begin{thm}\label{thm:min-path-sat}
Let $(X,d)$ be a totally"=decomposable finite metric space with dimension two and $(G,w)$ a minimal path"=saturated optimal realisation of $(X,d)$. Then $(G,w)$ is homeomorphic to a subgraph of $(G_d,w_\infty)$. 
\end{thm}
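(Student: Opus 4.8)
The plan is to combine the three main ingredients assembled in the preceding sections: (i) the non-expansive map $\psi$ from $||(G,w)||$ into the tight-span of Theorem~\ref{thm:inj:geometric:graph}, which is \emph{injective} because $(G,w)$ is path-saturated; (ii) Theorem~\ref{verth}, which, after transporting $\psi$ through the cell-complex isomorphism $\Lambda'$ of Theorem~\ref{thm:isom} (using a two-compatible split system $(\Ss,\ws)$ with $d=d_{(\Ss,\ws)}$, which exists by total-decomposability and two-dimensionality), guarantees that $\psi$ maps every vertex of $G$ to a \emph{vertex} of the Buneman complex, hence of $\comt(d)$; and (iii) Proposition~\ref{prop:two-decomp:tight-span}, which says that on vertices of $\ct(d)$ the metric $d_\infty$ agrees with the graph metric $d_{(G_d,w_\infty)}$. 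First I would fix such a split system and identify $\cb(\Ss,\ws)$ with $\ct(d)$ via Theorem~\ref{thm:two-decomposable}, so that $\Phi$ becomes $\kappa$ and we may simply regard $\psi$ as an injective non-expansive map $||(G,w)||\to\ct(d)$ sending $x\in X$ to $\kappa(x)$ and $V(G)$ into $V(\ct(d))$.

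The heart of the argument is then to upgrade ``$\psi$ sends $V(G)$ into $V(\ct(d))$'' to ``$\psi$ realises $(G,w)$ as a homeomorphic subgraph of $(G_d,w_\infty)$''. For each edge $e=\{u,v\}\in E(G)$, I would look at the image $\psi(e)$, a path in $||(G,w)||$ from the vertex $\psi(u)$ to the vertex $\psi(v)$ of $\ct(d)$. By Lemma~\ref{lem:suff:opt:realisation}(1) the edge $e$ lies on every shortest path between some $x,x'\in X$; the corresponding $X$-geodesic $\gamma$ in $||(G,w)||$ maps under $\psi$ to a geodesic in $\ct(d)$ from $\kappa(x)$ to $\kappa(x')$ passing through $\psi(u)$ and $\psi(v)$, and restricting to $e$ gives a \emph{geodesic segment} of $\ct(d)$ from $\psi(u)$ to $\psi(v)$ of length exactly $w(e)$ (length is preserved here because $\psi$ is non-expansive yet must stretch the $X$-geodesic to full length $d(x,x')$, so it is an isometry on each $X$-geodesic, hence on $e$). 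Now $\psi(u),\psi(v)$ are vertices of $\ct(d)$, and a geodesic segment of $\ct(d)$ between two vertices must, by the quadrangle argument underlying Proposition~\ref{prop:two-decomp:tight-span}, be deformable onto a path in $G_d$ of the same length $w(e)=d_\infty(\psi(u),\psi(v))=d_{(G_d,w_\infty)}(\psi(u),\psi(v))$; so $\psi(u)$ and $\psi(v)$ are joined in $G_d$ by a path whose length equals the $G_d$-distance. The only subtlety is that I want each edge of $G$ to go to an \emph{internally vertex-disjoint} path of $G_d$ (so the union is genuinely a homeomorphic subgraph, not a graph with identifications); this follows from injectivity of $\psi$ together with the fact that distinct edges of $G$ meet only at shared endpoints and the induced geodesic segments can be chosen to overlap only where forced.

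Carrying this out, one assembles from these per-edge paths a subgraph $H$ of $G_d$: its branch vertices are the points $\psi(v)$, $v\in V(G)$, and each edge $e$ of $G$ is subdivided into the chosen $G_d$-path of total weight $w(e)$. Suppressing the degree-two vertices of $H$ recovers a weighted graph isomorphic to $(G,w)$ via $\psi|_{V(G)}$, which is exactly what ``homeomorphic to a subgraph of $(G_d,w_\infty)$'' means. It then remains to check that $\psi$ being injective prevents two distinct edges of $G$ from being routed along $G_d$-paths that coincide on a positive-length stretch: if they did, one could shortcut in $(G,w)$ and contradict either optimality (via Corollary~\ref{cor:triangle:free} and Lemma~\ref{lem:suff:opt:realisation}) or path-saturation/minimality, much as in the proof of Proposition~\ref{prop:split-flow:digraph}.

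\textbf{Main obstacle.} The delicate point is the last one: showing that the $G_d$-paths chosen for distinct edges of $G$ fit together without forced overlap, i.e.\ that $\psi$ really embeds $G$ as a \emph{topological} subgraph of $G_d$ rather than merely mapping vertices to vertices and edges to paths. Proposition~\ref{prop:two-decomp:tight-span} only gives \emph{some} $G_d$-geodesic between two tight-span vertices; one must make a \emph{coherent} choice across all edges and vertices of $G$ simultaneously, respecting the incidences of $G$, and then argue (using injectivity of $\psi$ and optimality/minimality of $(G,w)$) that this choice has no spurious self-intersections. I expect this is where the two-dimensionality of $\ct(d)$ is used most essentially: inside a quadrangle the replacement of a diagonal geodesic by a boundary path is canonical up to the two boundary routes, which is precisely the flexibility needed to route the edges of $G$ consistently.
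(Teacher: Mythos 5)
Your overall route is the same as the paper's: use Theorem~\ref{thm:inj:geometric:graph} to get the non-expansive injection $\psi$, transport it to the Buneman complex via Theorem~\ref{thm:two-decomposable} so that Theorem~\ref{verth} forces $\psi(V(G))\subseteq V(\ct(d))$, route each edge of $G$ along a shortest $G_d$-path between its image endpoints (whose length equals $w(e)$ by the isometry-on-$X$-geodesics observation together with Proposition~\ref{prop:two-decomp:tight-span}), and take the union of these paths as the candidate subgraph. Up to this point your argument matches the paper.

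The genuine gap is the step you yourself flag as the ``main obstacle'': you do not actually prove that the $G_d$-paths assigned to distinct edges of $G$ meet only in shared endpoints, and the mechanism you gesture at for closing it is not the one that works. No ``coherent choice'' of geodesics, no quadrangle-by-quadrangle deformation, and no appeal to injectivity of $\psi$ is needed here; injectivity plays no role in this step at all. The paper's argument is: fix the shortest paths \emph{arbitrarily}, let $(G^*,w^*)$ be their union, and first observe that $(G^*,w^*)$ is itself an \emph{optimal realisation} of $(X,d)$ --- it realises $d$ because $d(x,y)=d_{(G_d,w_\infty)}(x,y)\leq d_{(G^*,w^*)}(x,y)\leq d_{(G,w)}(x,y)$, and it is optimal because $l(G^*,w^*)\leq\sum_P w_\infty(P)=l(G,w)$. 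Once $(G^*,w^*)$ is known to be optimal, Lemma~\ref{lem:suff:opt:realisation} does the rest: if two paths $P_1,P_2$ shared two or more vertices, $G^*$ would contain an edge lying on no shortest path between points of $X$, contradicting part (1); and if they shared a single interior vertex $f$, part (2) applied at $f$ would produce a shortest path in $(G^*,w^*)$ passing through edges coming from both $P_1$ and $P_2$, hence a shortest path not corresponding to any in $\Gamma(G,w;X)$, contradicting path-saturation of $(G,w)$. Your proposal names roughly the right ingredients (optimality, path-saturation, Lemma~\ref{lem:suff:opt:realisation}) but does not supply this argument, and in particular misses the key intermediate fact that the union of the chosen paths is automatically an optimal realisation --- which is what makes an arbitrary, non-coherent choice of shortest paths suffice. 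Without that, the proof is incomplete at its decisive step.
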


\begin{proof}
Since $(G,w)$ is path-saturated, by Theorem~\ref{thm:inj:geometric:graph} there exists a non-expansive injection $\psi$ from $||(\rg,w)||$ to  $\ct(d)$ satisfying $\psi(x)=\kappa(x)$ for all $x\in X$, where $\kappa$ is the embedding of $(X,d)$ into its tight-span.
\bigskip
The first stage of the proof is to show that the injection $\psi$ maps vertices of $G$ to vertices of $\ct(d)$, that is, 
\begin{align}
\label{eq:inc:vertex}
\psi(V)\subseteq V(\ct(d))=V(G_d). 
\end{align}
To see this, consider the weighted two-compatible split system $(\Ss,\ws)$  on $X$ such that $d=d_{(\Ss,\ws)}$.
Then $(X,d)$ is embedded into the Buneman complex $\cb(\Ss,\ws)$ via the map $\Phi$ (see Section~\ref{subsect:BC}). By Theorem~\ref{thm:two-decomposable}~\eqref{thm:two-decomposable:isom}, there exists an isometry $\Lambda$ from $\cb(\Ss,\ws)$ to $\ct(d)$ such that $\Lambda(\Phi(x))=\kappa(x)$ holds for all $x\in X$. Noting that the map $\phi$ is non-expansive, it follows that the map $\psi':=\Lambda^{-1}\circ \psi$ is a non-expansive map from $||(\rg,w)||$ to $\cb(\Ss,\ws)$ with $\psi'(x)=\Phi(x)$ for all $x\in X$. By Theorem~\ref{verth} we have $\psi'(V)\subseteq V(\cb(\Ss,\ws))$, and hence $\Lambda(\psi'(V))=\psi(V)\subseteq V(\ct(d))=V(G_d)$. Hence Eq.~(\ref{eq:inc:vertex}) holds.

\bigskip

The second stage of the proof is to construct an optimal realisation of 
$(X,d)$ that is a subgraph of $(G_d,w_\infty)$.

To simplify the notation, we set $\ci{u}:=\psi(u)$ for all $u\in V(G)$ (note that $\ci{x}=x$ for all $x\in X$). Now, for each edge $e=\{u,v\}\in E(G)$,
since $\ci{u}$ and $\ci{v}$ are vertices in $G_d$ by Eq.~(\ref{eq:inc:vertex}), 
 we fix a shortest path $P_e$ in $G_d$ between $\ci{u}$ 
and $\ci{v}$ and let $\mathcal{P}:= \smallSetOf{P_e}{e\in E(G)}$ be the collection of all such paths.
We now consider the subgraph $(G^*,w^*)$ of $(G_d,w_\infty)$ defined by\begin{align*}
V(G^*)&=\SetOf{u\in V(G_d)}{u\text{ is contained in some path }P\in \mathcal{P}}\,,\\
E(G^*)&=\SetOf{\{u,v\}\in E(G_d)}{\text{$u$ and $v$ are adjacent in some path }P\in \mathcal{P}}
\end{align*}
and $w^*=w_\infty|_{E(G^*)}$. The aim of the second stage is to show that $(G^*,w^*)$ is an optimal realisation of $(X,d)$.

Firstly, note that there exists a map $\tau$ associating each path $P=v_0,v_1,\dots,v_k$ ($k\ge 1$) in  $\Gamma(G,w;X)$ with the walk\footnote{A \emph{walk} in $G^*$ is a sequence of not necessarily distinct vertices in $G^*$ with each consecutive pair forming an edge in $G^*$;   we shall show later in the proof that $\tau(P)$ is actually a path in $\Gamma(G^*,w^*;X)$.}  $\tau(P)$ in $(G^*,w^*)$ between $\ci{v}_0$ to $\ci{v}_k$ that is obtained from $P$ by replacing each edge $e_i=\{v_{i-1},v_i\}$ ($1\le i \le k$) with the path $P_{e_i}$ in $\mathcal{P}$.
In particular, if $P$ contains only one edge $e$, then $\tau(P)=P_e$.

Secondly,  we have 
\begin{align}\label{eq:length}
w(e)=d_{(\rg,w)}(u,v) \geq d_\infty(\ci{u},\ci{v})=
d_{(G_d,w_\infty)}(\ci{u},\ci{v})=w_\infty(P_e)=w^*(P_e)
\end{align}
for all edge $e=\{u,v\}$ in $E(G)$, where the inequality follows from $\psi$ being non-expansive, and the second equality follows from Proposition~\ref{prop:two-decomp:tight-span}.

Moreover, we claim that for each pair of distinct elements $x,y\in X$, we have 
\begin{align}
\label{eq:length:star}
d_{(G^*,w^*)}(x,y)\leq d_{(\rg,w)}(x,y).
\end{align}
To see this, fix a shortest path $P_{x,y}$ in $G$ (and hence $P_{x,y}\in \Gamma(G,w,;X)$) and consider the walk $\tau(P_{x,y})$ between $x$ and $y$ in $(G^*,w^*)$. Then  we have
\begin{align*}
d_{(G^*,w^*)}(x,y) \le w^*(\tau(P_{x,y})) =\sum_{i=1}^k w^*(P_{e_i})
\leq \sum_{i=1}^k w(e_i)=w(P_{x,y})=d_{(G,w)}(x,y),
\end{align*}
where the second inequality follows from Eq.~(\ref{eq:length}). Hence Eq.~(\ref{eq:length:star}) holds as claimed.

Next, since $(G_d,w_\infty)$ and $(G,w)$ are both realisations of $(X,d)$, by Eq.~(\ref{eq:length:star}) for all $x,y\in X$ we have
\[
d(x,y)=d_{(G_d,w_\infty)}(x,y)\leq d_{(G^*,w^*)}(x,y)\leq d_{(\rg,w)}(x,y)=d(x,y)\,.
\]
Hence 
\begin{align}
d(x,y)=d_{(G_d,w_\infty)}(x,y)= d_{(G^*,w^*)}(x,y)= d_{(\rg,w)}(x,y)=w^*(\tau(P_{x,y}))\,
\end{align}
holds for every shortest path $P_{x,y}$ between $x$ and $y$ in $G$.
Therefore $(G^*,w^*)$ is a realisation of $(X,d)$. In addition, this implies $\tau(P_{x,y})$ is a shortest path between $x$ and $y$ in $(G^*,w^*)$, and hence $\tau$ is a map from $\Gamma(G,w;X)$ to $\Gamma(G^*,w^*;X)$.

Finally, 
we have
\begin{align*}
l(G^*,w^*)\leq \sum_{P\in \cP}w_\infty(P)
=\sum_{e\in E(G)}w_\infty(P_e)
\leq \sum_{e\in E(G)}w(e)=l(G,w)\,,
\end{align*}
where the the first inequality follows from construction and the second one from Eq.~(\ref{eq:length}). Since $(G,w)$ is optimal, this implies that $(G^*,w^*)$ is also optimal, and that
\begin{align}
\label{eq:optimal:length}
l(G^*,w^*)= \sum_{P\in \cP}w_\infty(P)
=\sum_{e\in E(G)}w_\infty(P_e) =
 \sum_{e\in E(G)}w(e)=l(G,w).
\end{align}
This complete the second stage of the proof.


\bigskip
 In the third and final stage of the proof, we shall  show that $(G^*,w^*)$  is homeomorphic to $(G,w)$.  Suppose first that for any two distinct paths $P_{e_1}$ and $P_{e_2}$ in $\mathcal{P}$ with $e_i=(u_i,v_i)$ for $i=1,2$, 
we have 
\begin{equation}
\label{eq:disjoint:paths}
V(P_{e_1})\cap V(P_{e_2}) \subseteq \{\ci{u}_1,\ci{v}_1\} \cap \{ \ci{u}_2,\ci{v}_2\}.
\end{equation}
Then the weighted graph 
obtained from $(G^*,w^*)$ by suppressing all vertices with degree two in $V(G^*)\setminus \psi(V)$ is isomorphic to $(G,w)$. That is, $(G,w)$ is homeomorphic to $(G^*,w^*)$, a subgraph of $(G_d,w_\infty)$, as desired. So it remains to show that Inclusion~\eqref{eq:disjoint:paths} always holds.

To this end, note first that two distinct paths in $\mathcal{P}$ do not share a common edge in $(G_d,w_\infty)$, because otherwise we have $l(G^*,w^*)< \sum_{P\in \cP}w_\infty(P)$, a contradiction to Eq.~(\ref{eq:optimal:length}).
Therefore, the map $\tau: \Gamma(\rg,w,;X) \to \Gamma(\rg^*,w^*;X)$ is injective. 

Secondly, we must have  $|V(P_{e_1})\cap V(P_{e_2})|<2$. Indeed, if this were not the case, there would exist two vertices $u$ and $v$ in $V(P_{e_1})\cap V(P_{e_2})$. Let $P_1$ and $P_2$ denote the subpath from $u$ to $v$ induced by $P_{e_1}$ and $P_{e_2}$, respectively.  Now consider the 
path $P^{\circ}_{e_2}$ obtained from $P_{e_2}$ by replacing $P_2$ by $P_1$ and let $\mathcal{P}^{\circ}$ be the collection of paths obtained from $\mathcal{P}$ by replacing $P_{e_2}$ with ${P}^{\circ}_{e_2}$. Since two distinct paths in $\mathcal{P}$ do not share a common edge, we have 
\begin{align}
\label{eq:alt:length}
\sum_{P\in {\cP}^{\circ} }w_\infty(P)< \sum_{P\in \cP}w_\infty(P)=l(G^*,w^*). 
\end{align}
 On the other hand, using an argument similar to showing that $(G^*,w^*)$ is an optimal realisation of $(X,d)$, we know that the graph $(G^{\circ},w^{\circ})$ 
obtained as the union of the paths in $\mathcal{P}^{\circ}$ is also an optimal realisation of $(X,d)$, a contradiction to Eq.~(\ref{eq:alt:length}) and the fact that $(G^*,w^*)$ is optimal. Hence  $|V(P_{e_1})\cap V(P_{e_2})|<2$ as claimed.


So, to complete the proof, assume that $(V(P_{e_1})\cap V(P_{e_2}))\setminus( \{\ci{u}_1,\ci{v}_1\}\cap \{\ci{u}_2,\ci{v}_2\})=\{v\}$ for some $v\in V(G)$. For $i=1,2$, let $f_i$ and $f'_i$
be the two edges in $P_{e_i}$ that 
are incident with $v$. Note that $\{f_1,f'_1\}\cap \{f_2,f'_2\}=\emptyset$. Since $(\rg^*,w^*)$ is isomorphic to an optimal realisation,  
Lemma~\ref{lem:suff:opt:realisation} implies that 
there exists a path $P^*$ in $\Gamma(\rg^*,w^*;X)$ 
that contains the edges $f_1$ and $f_2$.
Since for each path $P$ in $\Gamma(\rg,w;X)$, the path $\tau(P)$ in $(\rg^*,w^*)$ can contain at most two edges incident with $v$ and the set of  these two edges must be $\{f'_1,f_1\}$ or $\{f'_2,f_2\}$ (but not both), we know that $P^*$ is not the image of any path $P$ in $\Gamma(\rg,w;X)$ under the map $\tau$. Therefore, the map $\tau: \Gamma(\rg,w,;X) \to \Gamma(\rg^*,w^*;X)$  is not surjective. But $\tau$ is injective, and so    
 $|\Gamma(\rg^*,w^*;X)|>|\Gamma(\rg,w;X)|$, 
a contradiction to the assumption that $(\rg,w)$ is path"=saturated.
This completes the proof of Eq.~\eqref{eq:disjoint:paths}, and hence also the theorem.\end{proof}

\subsection*{Acknowledgements}
Work of SH was supported by a 
fellowship within the Postdoc"=Programme of 
the German Academic Exchange Service (DAAD) and
the University of East Anglia. Part of the work of TW was supported by the Singapore MOE grant
R-146-000-134-112. JHK was partially supported 
by the Basic Science Research Program through 
the National Research Foundation of Korea (NRF)
funded by the Ministry of Education, Science and Technology 
(2010-0008138), and JHK was also  partially supported by the `100 talents' program of the Chinese Academy of
Sciences. Finally, we thank the comments and suggestions of two anonymous reviewers that have led to a substantial improvement of this paper.

\bibliographystyle{amsplain}
\bibliography{2-decomp}

\end{document}